\documentclass[11pt]{amsart}

\usepackage{url,graphicx,tabularx,array,geometry}
\usepackage{graphicx}
\usepackage{amssymb}
\usepackage{fancyhdr}
\usepackage{wrapfig}
\usepackage{epstopdf}
\usepackage{amsmath}
\usepackage[labelfont=bf]{caption}
\usepackage[hidelinks]{hyperref}
\usepackage{amsthm}
\usepackage{bbm}
\usepackage[T1]{fontenc}
\usepackage{yfonts}
\usepackage{harpoon}
\usepackage[all]{xy}
\usepackage{tikz}
\usepackage{hyperref}
\usepackage{comment}
\hypersetup{
 colorlinks,
 linkcolor={red!50!black},
 citecolor={blue!50!black},
 urlcolor={blue!80!black}
}
\usepackage[maxbibnames = 50]{biblatex}
\renewbibmacro{in:}{}
\DeclareFieldFormat{pages}{#1}
\addbibresource{graph_theory_bib.bib}

\usetikzlibrary{decorations.markings}
\tikzstyle{vertex}=[circle, draw, inner sep=0pt, minimum size=6xpt]





\DeclareMathOperator{\diam}{diam}

\makeatletter
\def\imod#1{\allowbreak\mkern10mu({\operator@font mod}\,\,#1)}
\makeatother


\theoremstyle{plain}
\newtheorem{theorem}{Theorem}

\newtheorem{theoremN}{Theorem}[section]
\newtheorem{exampleN}[theoremN]{Example}
\newtheorem{propositionN}[theoremN]{Proposition}
\newtheorem{corollaryN}[theoremN]{Corollary}
\newtheorem{lemmaN}[theoremN]{Lemma}
\newtheorem{conjectureN}[theoremN]{Conjecture}

\theoremstyle{definition}
\newtheorem*{definition}{Definition}
\theoremstyle{remark}

\newtheorem*{remark}{Remark}



\newcommand{\tightoverset}[2]{%
  \mathop{#2}\limits^{\vbox to -.5ex{\kern-0.75ex\hbox{$#1$}\vss}}}





%
%
%
%

\begin{document}
\title[Minimal prime graphs]{Minimal prime graphs of finite solvable groups}

\author[Florez, Higgins, Huang, Keller, and Shen]{Chris Florez, Jonathan Higgins, Kyle Huang, Thomas Michael Keller, Dawei Shen}

\address{Chris Florez, Department of Mathematics, David Rittenhouse Lab, University of Pennsylvania, 209 South 33rd Street, Philadelphia, PA 19104-6395, USA}
\email{cflorez@sas.upenn.edu}
\address{Jonathan Higgins, Mathematics and Computer Science Department, Wheaton College, 501 College Ave, Wheaton, IL 60187, USA}
\email{jonathan.higgins@my.wheaton.edu}
\address{Kyle Huang, Mathematics Department, University of California-Berkeley, 2227 Piedmont Avenue, Berkeley, CA 94720, USA}
\email{kyle.huang@berkeley.edu}
\address{Thomas M. Keller, Department of Mathematics, Texas State University, 601 University Drive, San Marcos, TX 78666-4616, USA}
\email{keller@txstate.edu}
\address{Dawei Shen, Department of Mathematics and Statistics, Washington University in St. Louis, 1 Brookings Dr., St. Louis, MO 63105, USA}
\email{shen.dawei@wustl.edu}

\subjclass[2010] {Primary: 20D10, Secondary: 05C25}
\keywords {Prime graph, Solvable group, $3$-colorable, triangle-free}

\begin{abstract}
We explore graph theoretical properties of minimal prime graphs of finite solvable groups.
In finite group theory studying the prime graph of a group has been an important
topic for the past almost half century. Recently prime graphs of solvable groups have
been characterized in graph theoretical terms only. This now allows the study of these
graphs with methods from graph theory only. Minimal prime graphs turn out to be of particular interest, and in this paper we pursue this further by exploring, among other things, diameters, Hamiltonian cycles and the property of being self-complementary for minimal prime graphs. We also study a new, but closely related notion of minimality for
prime graphs and look into counting minimal prime graphs.

\end{abstract}

\maketitle

\section{Introduction}

The topic of this paper are prime graphs of finite solvable
groups. The prime graph of a finite group is the graph whose vertices are the prime numbers
dividing the order of the group, with two vertices being linked by an edge if and only if their 
product divides the order of some element of the group. Prime graphs were introduced by
Gruenberg and Kegel in the 1970s and have been an object of continuous study since then.
They were among the first graphs assigned to groups. This idea of representing group theoretical
data via graphs and describing them via graph theoretical notions proved so successful 
that today there is a myriad of graphs (e.g. character degree graphs, conjugacy class size
graphs, etc.) and a whole industry of exploring them. For this reason, today prime graphs are
often referred to as Gruenberg-Kegel graphs.\\

While a focus in the study of prime graphs has been on simple groups for a long time,
the main result of \cite{2015_REU_Paper}, somewhat surprisingly, is a purely graph theoretical
characterization of prime graphs of solvable groups: A (simple) graph is the prime graph
of a finite solvable group if and only if its complement is triangle-free and 3-colorable.
This made it possible to study
simple groups whose prime graph is that of a finite solvable group, see 
\cite{Maslova_almost_simple}. Moreover, this characterization 
allowed the authors of \cite{2015_REU_Paper}, for solvable groups, to introduce and study 
the idea of
minimal prime graphs: connected graphs whose complement is triangle-free and 3-colorable,
but removing an edge means that the complement has a triangle or is no longer 3-colorable.
As outlined in detail in \cite{2015_REU_Paper}, the groups whose prime graphs are minimal are groups which are "saturated in Frobenius actions"
and have a restricted, but highly non-trivial structure. \\

In this paper we will turn our focus to graph theoretical properties of minimal prime graphs. Some important properties of minimal prime graphs were already proved in \cite{2015_REU_Paper}, such as having complement with chromatic number 3 and having an induced 5-cycle. We add to this by exploring diameters, Hamiltonian cycles, the property of being self-complementary, and others. We also take a look at counting minimal prime graphs. Moreover, we introduce an alternative notion of minimal prime graphs - minimally connected prime graphs -
which turns out to be closely related to minimal prime graphs.\\





\section{Minimally Connected Prime Graphs and Minimal Prime Graphs}

Below, we provide a formal definition for the Minimally Connected Prime Graphs that we will be studying throughout this paper.

\begin{definition} 
    A \textbf{minimally connected prime graph (MCPG)} is a graph $\Gamma$ with at least two vertices such that 
    \begin{enumerate}
        \item $\Gamma$ is connected
        \item $\overline{\Gamma}$ is triangle-free
        \item $\overline{\Gamma}$ is 3-colorable
        \item The removing of any edge results in a violation of at least one of the conditions (i), (ii), or (iii).
    \end{enumerate}\end{definition}
    
    Of particular interest are what we refer to as complete bridge graphs.

\begin{definition}
\label{complete bridge}
    A \textbf{complete bridge graph}, denoted $B_{m,n}$, is a graph on $m+n$ points, where $m$ of the points are totally connected with one another, and likewise for the other $n$ points, and the two subgraphs are connected by one edge, which we call the bridge. We take as convention that $m \geq n$. 
\end{definition}

\begin{lemmaN} \label{2-complete graphs are MCPG}
    Let $\Gamma = B_{m, n}$. Then $\Gamma$ is a MCPG if and only if $m, n > 1$ or $m \in\{1,2\}, n = 1$. 
\end{lemmaN}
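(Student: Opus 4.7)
\bigskip

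\noindent\textbf{Proof proposal for Lemma \ref{2-complete graphs are MCPG}.}

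The plan is to first verify that $B_{m,n}$ always satisfies conditions (i), (ii), (iii) of the MCPG definition, and then do a case analysis on which edge is removed to determine exactly when condition (iv) holds. Let $u$ denote the bridge-endpoint in the $K_m$-side and $v$ the bridge-endpoint in the $K_n$-side. The key structural observation I would begin from is that $\overline{B_{m,n}}$ is the complete bipartite graph $K_{m,n}$ with the single edge $uv$ deleted; in particular it is bipartite, hence triangle-free and $2$-colorable, so (ii) and (iii) are automatic, and (i) is obvious. The question thus reduces entirely to analyzing condition (iv).

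Next I would split the edges of $B_{m,n}$ into four types and examine each: (a) the bridge $uv$; (b) an edge $xy$ inside $K_m$ with $x,y \neq u$; (c) an edge $ux$ with $x \in K_m \setminus \{u\}$; (d) the analogous edges inside the $K_n$-side. For (a), removal disconnects $\Gamma$, so (i) fails. For (b), adding $xy$ to $\overline{\Gamma}$ creates a triangle with any vertex of the opposite side (which exists since $n \geq 1$), violating (ii). By the mirror argument, internal $K_n$-edges not incident to $v$ produce a triangle using any vertex of $K_m$. Edges of type $vy$ with $y \in K_n \setminus\{v\}$ likewise create a triangle with any $w \in K_m \setminus \{u\}$, which exists since $m \geq n \geq 2$ whenever such an edge is present.

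The delicate case, and the one that actually separates minimality from non-minimality, is (c): removing an edge $ux$ with $x \in K_m \setminus \{u\}$. Here adding $ux$ to $\overline{\Gamma}$ yields a triangle $\{u,x,w\}$ precisely when there is some $w \in K_n \setminus \{v\}$, i.e.\ when $n \geq 2$. If instead $n = 1$, then $\overline{\Gamma}$ gains the lone internal edge $ux$ and the resulting graph remains bipartite (partition $\{u,x\}$ versus $\{v\}$ together with isolated classes), hence still triangle-free and $3$-colorable; moreover $B_{m,1} \setminus \{ux\}$ is still connected via any other vertex $x' \in K_m \setminus \{u,x\}$, provided such an $x'$ exists, i.e.\ provided $m \geq 3$. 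So for $m \geq 3$, $n = 1$ the edge $ux$ can be removed without violating any condition, and $B_{m,1}$ fails to be an MCPG. On the other hand, for $(m,n) = (1,1)$ and $(2,1)$ the only edges are the bridge and (in the second case) $ux$, and in each case removing them disconnects the graph, so these graphs are MCPGs.

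The main obstacle is the asymmetry between cases (b) and (c) at $n = 1$: in case (c) one must verify three things simultaneously — connectedness, triangle-freeness, and $3$-colorability of the altered complement — and it is the interaction of "$u$ and $v$ remain non-adjacent in $\overline{\Gamma}$" with "no other vertex of $K_n$ is available" that blocks the would-be triangle. Once this computation is pinned down, collecting the cases gives exactly the stated dichotomy: $B_{m,n}$ is an MCPG iff $m,n > 1$, or $(m,n) \in \{(1,1),(2,1)\}$.
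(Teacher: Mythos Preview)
Your argument is correct and proceeds along the same lines as the paper's proof: identify $\overline{B_{m,n}}$ as $K_{m,n}\setminus\{uv\}$ to dispose of (i)--(iii), then case-analyse edge removal. One small slip to fix: in the $n=1$ case your parenthetical bipartition should read $\{u,v\}$ versus $K_m\setminus\{u\}$, since the remaining $K_m$-vertices are all joined to $v$ in the complement and hence are not isolated; the bipartiteness conclusion itself is unaffected. For the failure of minimality when $m\geq 3$, $n=1$, the paper argues instead that $B_{m,1}$ properly contains the spanning subgraph $B_{m-1,2}$ (reassign $v$ to the small side together with $u$) and drops a superfluous edge, but your direct verification that removing $ux$ preserves (i)--(iii) is equally valid and arguably cleaner.
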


\begin{proof} 
    Let $\Gamma =B_{m,n}$, where $m\geq n \geq 1$. It clearly has at least 2 vertices. The graph $\Gamma$ is also connected because the two complete subgraphs are linked by an edge. 
    
    Next, we will show that $\overline{\Gamma}$ is 2-colorable. In fact something stronger is true - obviously $\overline{\Gamma}$ is the complete bipartite graph with one edge removed. 
The fact that $\overline{\Gamma}$ is 2-colorable implies it is triangle-free. 
    

    Finally, we want to show that if we remove any edge from $\Gamma$, (at least) one of the first three conditions in the definition of minimally connected prime graphs is violated. Removing the bridge edge results in a disconnected graph.\\
    Let $n > 1$. Then removing an edge $\alpha \beta$ which is not the bridge edge, combined with a vertex $\gamma$ which is not connected to either $\alpha, \beta$ induces a triangle in the complement. Such a $\gamma$ exists since $m\geq n >1$. 
    
    Next let $m \in\{ 1,2\}, n = 1$. Then the removal of any edge will result in a disconnected graph.  
    
    Now let $\Gamma = B_{m,n}$ where $m >2, n=1$. Then $\Gamma$ contains $B_{m-1, 2}$ as a proper subgraph, which corresponds to moving the bridge vertex over to the other cluster. Thus the removal of an edge towards this subgraph demonstrates that $\Gamma$ is not minimal. 
\end{proof}
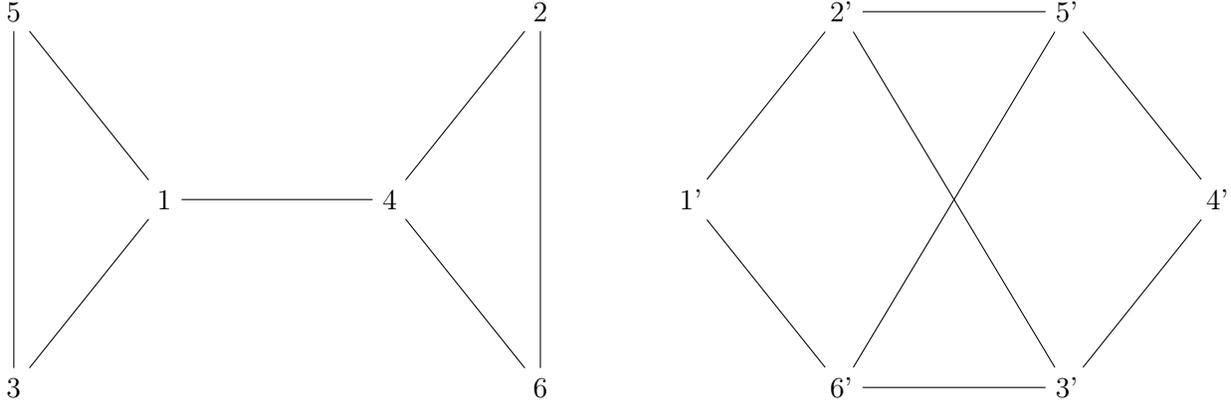
\begin{figure}
\label{fig1}
    \centering
\begin{tikzpicture}

  [scale=.7,auto=left,every node/.style={circle,fill=black!20}]
  \node (n6) at (1,10) {5};
  \node (n4) at (3,7.5)  {1};
  \node (n5) at (6,7.5)  {4};
  \node (n1) at (8,10) {2};
  \node (n2) at (8,5)  {6};
  \node (n3) at (1,5)  {3};
  \node (n7) at (12,10) {2'};
  \node (n8) at (15,10) {5'};
  \node (n9) at (10,7.5) {1'};
  \node (n10) at (12,5) {6'};
  \node (n11) at (15,5) {3'};
  \node (n12) at (17,7.5) {4'};

  \foreach \from/\to in {n6/n4,n4/n5,n5/n1,n1/n2,n2/n5,n6/n3,n3/n4,n9/n7,n7/n8,n8/n12,n12/n11,n11/n10,n10/n9,n7/n11,n8/n10}
    \draw (\from) -- (\to);
\end{tikzpicture}
\caption{The graph on the left is a MCPG where $|V(\Gamma[\pi])| = |V(\Gamma[\sigma])=3$. The graph on the right is its complement, where the primed points correspond to the numbered points on the left. This demonstrates the point above---if we let the odd points be one color (blue) and the evens be another (green), then the complement graph is seen to be a  2-colorable hexagon, and all of the blue points are connected with all the green points, excluding one edge.}
\end{figure}

Next, we will define some more notation. Let the set of MCPGs defined in Lemma \ref{2-complete graphs are MCPG} be denoted as $\mathcal{C}$.

From Theorem \ref{2-complete graphs are MCPG}, it is natural to wonder whether a natural extension of bridge 
having three disjoint, complete induced subgraphs can also be a MCPG, but it is immediate to see that then one can
pick vertices from each of the three complete subgraphs which do not have any edge between them, so the complement
has a triangle.\\


%

Next we recall the notion of a minimal prime graph as it was defined in \cite{2015_REU_Paper}.

\begin{definition} 
    A \textbf{minimal prime graph} is a graph $\Gamma$ with at least two vertices such that 
    \begin{enumerate}
        \item $\Gamma$ is connected
        \item $\overline{\Gamma}$ is triangle-free
        \item $\overline{\Gamma}$ is 3-colorable
        \item The removing of any edge results in a violation of at least one of the conditions (ii) or (iii).
    \end{enumerate}
\end{definition}

In particular, one can potentially remove an edge from a minimal prime graph to attain a disconnected prime graph. 

We let $\mathcal{G}$ denote the set of minimal prime graphs and $\hat{\mathcal{G}}$ denote the set of minimally connected prime graphs. From the similarity of the definitions we immediately get the following Lemma:

\begin{lemmaN} \label{MPGs are MPCGs}
    $\mathcal{G} \subsetneq \hat{\mathcal{G}}$
\end{lemmaN}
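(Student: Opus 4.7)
The plan is to verify the containment by comparing the two definitions directly, and then to exhibit an explicit witness to the strictness using the examples produced in Lemma~\ref{2-complete graphs are MCPG}.

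For the containment $\mathcal{G} \subseteq \hat{\mathcal{G}}$, I would simply observe that the two definitions agree on conditions (i), (ii), (iii), and differ only in condition (iv). The MPG condition requires that removing any edge violates (ii) or (iii), while the MCPG condition only requires that removing any edge violates (i), (ii), or (iii). Since any violation of (ii) or (iii) is in particular a violation of one of (i), (ii), (iii), every $\Gamma \in \mathcal{G}$ automatically lies in $\hat{\mathcal{G}}$. This step is essentially a tautology from the definitions and should be written in a single line.

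For strictness, the main (and only slightly nontrivial) task is to produce a graph in $\hat{\mathcal{G}} \setminus \mathcal{G}$. A natural candidate is the bridge graph $B_{2,1}$, which is the path $P_3$ on three vertices. By Lemma~\ref{2-complete graphs are MCPG}, $B_{2,1}$ lies in $\hat{\mathcal{G}}$. To show it is not in $\mathcal{G}$, I would remove one of its two edges and verify that the resulting graph (a single edge together with an isolated vertex) still has triangle-free and $3$-colorable complement; the only violated property is connectivity. Hence the MPG condition (iv) fails for $B_{2,1}$, so $B_{2,1} \notin \mathcal{G}$. The graph $B_{1,1}$ (a single edge) would work equally well.

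There is no real obstacle here; the argument is essentially a comparison of two conditions plus a two-line example. The only thing worth being careful about is making sure the example is indeed an MCPG (which we may cite from Lemma~\ref{2-complete graphs are MCPG}) and explicitly checking that the removed-edge graph has triangle-free, $3$-colorable complement, so that the obstruction really is connectivity alone.
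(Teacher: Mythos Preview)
Your proposal is correct and follows essentially the same approach as the paper: the containment is dispatched by the tautological observation that a violation of (ii) or (iii) is in particular a violation of (i), (ii), or (iii), and strictness is witnessed by a small complete bridge graph. The paper uses the graph in Figure~1 (and also mentions the single edge $B_{1,1}$) as its witness, while you use $B_{2,1}$; these are interchangeable, and both rely on Lemma~\ref{2-complete graphs are MCPG} for membership in $\hat{\mathcal{G}}$.
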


\begin{proof} Let $\Gamma$ be a minimal prime graph. Then removing any edge violates either condition (ii) or (iii), so in particular we can see that removing any edge violates either condition (i), (ii), or (iii). To see that the inclusion is strict, consider the example in Figure 1. Removing the middle edge still yields a prime graph but is disconnected. A simpler example is the connected graph on 2 vertices.  

\end{proof}

From here we can classify the graphs which are minimally connected prime graphs but \textbf{not} minimal prime graphs. We will see that the graphs in $\hat{\mathcal{G}} \setminus \mathcal{G}$ exactly take the form of complete bridge graphs described in Theorem \ref{2-complete graphs are MCPG}. We see that $\mathcal{C}$ is the set of all complete bridge graphs, with some small exceptions. 

\begin{theoremN} \label{2-complete graphs are the set difference}
    Let $\Gamma \in \hat{\mathcal{G}} \setminus \mathcal{G}$. Then $\Gamma = B_{m, n}$ for some $m, n$ where $m \geq n > 1$ or $m \in \{1, 2\}, n = 1$.
\end{theoremN}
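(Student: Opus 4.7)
The plan is to extract a distinguished bridge edge $e$ in $\Gamma$ using the defining gap between $\hat{\mathcal{G}}$ and $\mathcal{G}$, then analyze the two components of $\Gamma - e$ directly and show each must be a clique; together with Lemma~\ref{2-complete graphs are MCPG} this yields the claimed structure.

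First I would use the hypothesis $\Gamma \in \hat{\mathcal{G}} \setminus \mathcal{G}$ to extract an edge $e = uv$ of $\Gamma$ whose removal violates condition (i) but neither (ii) nor (iii). Such an edge must exist: since $\Gamma$ is not a minimal prime graph, there is some edge whose removal violates neither (ii) nor (iii); and since $\Gamma$ is a minimally connected prime graph, that removal must violate at least one of (i), (ii), (iii), so it must violate (i). Hence $e$ is a bridge of $\Gamma$, so $\Gamma - e$ splits into two components $A \ni u$ and $B \ni v$ with $e$ the unique $\Gamma$-edge between $A$ and $B$, and $\overline{\Gamma - e}$ is still triangle-free.

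The main step is a triangle-hunt showing that both $A$ and $B$ induce cliques in $\Gamma$. Because the only cross-edge in $\Gamma$ was $e$, which has now been removed, every pair $\{a, b\}$ with $a \in A$ and $b \in B$ is an edge of $\overline{\Gamma - e}$. So if there were two vertices $a_1, a_2 \in A$ non-adjacent in $\Gamma$, then $a_1 a_2$ would be an edge of $\overline{\Gamma - e}$ and, for any $b \in B$, the set $\{a_1, a_2, b\}$ would form a triangle in $\overline{\Gamma - e}$, contradicting triangle-freeness. By the symmetric argument $B$ is also a clique, which identifies $\Gamma$ as $B_{|A|,|B|}$. Setting $m = \max(|A|, |B|)$ and $n = \min(|A|, |B|)$ and invoking Lemma~\ref{2-complete graphs are MCPG} (since $\Gamma$ is itself in $\hat{\mathcal{G}}$, hence an MCPG) then restricts $(m, n)$ to the allowed values $m \geq n > 1$ or $(m, n) \in \{(1, 1), (2, 1)\}$.

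The main obstacle I anticipate is essentially administrative rather than combinatorial: one has to parse the three conditions carefully to justify that the chosen $e$ really is a bridge whose removal leaves the complement both triangle-free and $3$-colorable, and to check that the clique argument handles the degenerate cases $|A| = 1$ or $|B| = 1$ vacuously (where no pair of distinct vertices to form a non-edge even exists). Once this bookkeeping is in place, the near-complete-bipartite structure of $\overline{\Gamma - e}$ between $A$ and $B$ does all of the combinatorial work, and no further ingenuity is required.
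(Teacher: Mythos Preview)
Your proposal is correct and follows essentially the same approach as the paper: extract a bridge edge from the gap between the two minimality notions, show each side is a clique via a triangle in the complement, then invoke Lemma~\ref{2-complete graphs are MCPG}. Your choice to run the triangle argument in $\overline{\Gamma - e}$ rather than in $\overline{\Gamma}$ is a small but genuine improvement over the paper's version, since it lets you use \emph{any} vertex $b$ on the other side rather than a non-bridge vertex, and so handles the degenerate case $|B| = 1$ without a separate check.
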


\begin{proof} 
    By $\Gamma \in \hat{\mathcal{G}} \setminus \mathcal{G}$, we note that there necessarily exists an edge in $\Gamma$ such that, if removed, results in a disconnected prime graph. Call the resulting connected components $\Gamma_1$ and $\Gamma_2$, and suppose that $\Gamma_1$ has $m$ vertices and $\Gamma_2$ has $n$ vertices, with $m \geq n \geq  1$. 
    
    Suppose that $\Gamma_1$ is not a complete graph. If not, one can pick two vertices $v_1, v_2$ not linked by an edge, and another vertex $u \in \Gamma_2$ which is not a bridge vertex, to form a triangle in the complement. A similar reasoning applies to $\Gamma_2$. This shows that $\Gamma = B_{m,n}$ and so we are done by Lemma
    \ref{2-complete graphs are MCPG}.
\end{proof}
One desirable property of MCPGs, and our primary motivation in consdering such graphs, is that they properly satisfy minimality. This is reflected in the following Lemma, which assists us greatly in later describing the diameter in Theorem \ref{classifying graphs by their diameter}. 

\begin{lemmaN} \label{Minimality of MCPGs}
    Let $\Gamma$ be a MCPG with $n$ vertices. Then for any subgraph $\Lambda$ which satisfies (i), (ii), or (iii) on $n$ vertices, $\Lambda = \Gamma$. 
\end{lemmaN}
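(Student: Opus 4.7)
The plan is to interpret the statement as saying: any spanning subgraph $\Lambda$ of $\Gamma$ on $n$ vertices that simultaneously satisfies (i), (ii), and (iii) must coincide with $\Gamma$. (If $\Lambda$ has $n$ vertices and is a subgraph of $\Gamma$, then $V(\Lambda)=V(\Gamma)$ and the containment is purely at the edge level.) Suppose for contradiction $\Lambda \ne \Gamma$. Then there is some edge $e \in E(\Gamma) \setminus E(\Lambda)$, and in particular $E(\Lambda) \subseteq E(\Gamma)\setminus\{e\} = E(\Gamma \setminus e)$.

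Next I would invoke the minimality condition (iv) of the MCPG $\Gamma$ at the edge $e$: the graph $\Gamma \setminus e$ must violate at least one of (i), (ii), (iii). The heart of the argument is then a short monotonicity observation, handled case by case.

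If $\Gamma\setminus e$ fails (i), it is disconnected; every edge of $\Lambda$ lies inside some connected component of $\Gamma\setminus e$, so $\Lambda$ is disconnected as well, contradicting that $\Lambda$ satisfies (i). If $\Gamma\setminus e$ fails (ii), then $\overline{\Gamma\setminus e}$ contains a triangle; but $E(\Lambda)\subseteq E(\Gamma\setminus e)$ gives $E(\overline{\Gamma\setminus e})\subseteq E(\overline{\Lambda})$, so the same three vertices form a triangle in $\overline{\Lambda}$, contradicting (ii) for $\Lambda$. If $\Gamma\setminus e$ fails (iii), then $\chi(\overline{\Gamma\setminus e})\ge 4$; since the chromatic number is monotone under edge addition and $\overline{\Gamma\setminus e}\subseteq \overline{\Lambda}$, we get $\chi(\overline{\Lambda})\ge 4$, contradicting (iii) for $\Lambda$. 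In all three cases we obtain a contradiction, so $\Lambda=\Gamma$.

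I do not expect any serious obstacle here: the only substantive content is the three-line monotonicity observation (disconnection, triangle-containment, and chromatic number are all preserved when passing from $\Gamma\setminus e$ to the further subgraph $\Lambda$). The main thing to be careful about is clearly distinguishing between $\Gamma$ and its complement when tracking edge containments, so I would set up the notation $E(\Lambda)\subseteq E(\Gamma\setminus e)$ and the dual $E(\overline{\Gamma\setminus e})\subseteq E(\overline{\Lambda})$ explicitly before entering the case analysis.
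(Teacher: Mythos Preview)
Your proposal is correct and follows essentially the same approach as the paper: assume $\Lambda$ misses some edge $e$ of $\Gamma$, invoke condition (iv) at $e$, and observe that whichever of (i), (ii), (iii) fails for $\Gamma\setminus e$ must also fail for the further subgraph $\Lambda$ by monotonicity. The paper compresses the three cases into the single sentence ``removing more edges cannot undo any of (1), (2), or (3),'' whereas you spell out each monotonicity check explicitly, but the argument is the same.
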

One can think of this as minimality in the poset of connected prime graphs on $n$ vertices. 
\begin{proof}
    Suppose $\Lambda$ is a proper subgraph of $\Gamma$, i.e. it is missing an edge in $\Gamma$. By $\Gamma$ being a MCPG, we have that removing this edge results in a graph which (1) is disconnected, (2) has a triangle in the complement, or (3) whose complement is not 3-colorable. \\[6pt]
    It is clear that removing more edges from $\Gamma$ cannot undo any of (1), (2), or (3),
    which implies the assertion.
\end{proof}
\begin{corollaryN} \label{Edge removal}
    Every MCPG on $n$ vertices can be attained by starting with the complete graph on $n$ vertices and removing edges until the removal of any edge causes the graph to violate (i), (ii), or (iii). 
\end{corollaryN}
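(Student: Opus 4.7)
The plan is to show that for any MCPG $\Gamma$ on $n$ vertices there is a sequence of single-edge deletions starting from $K_n$ and ending at $\Gamma$ such that every intermediate graph still satisfies conditions (i), (ii), (iii). I would begin by observing that $K_n$ is a valid starting point: it is connected, and its complement is the empty graph on $n$ vertices, which is triangle-free and even $1$-colorable. So the process has a legitimate initial state.

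Given $\Gamma$, I would enumerate the non-edges of $\Gamma$ (equivalently, the edges of $\overline{\Gamma}$) in any order $e_1, e_2, \ldots, e_k$, and set $\Gamma_i := K_n \setminus \{e_1, \ldots, e_i\}$, so that $\Gamma_0 = K_n$ and $\Gamma_k = \Gamma$. The central step is to verify that each intermediate $\Gamma_i$ satisfies (i)--(iii). This follows from monotonicity in two directions. On the one hand, $\Gamma_i$ is a spanning supergraph of $\Gamma$, so its connectivity is inherited from that of $\Gamma$. On the other hand, $\overline{\Gamma_i}$ is a subgraph of $\overline{\Gamma}$, so triangle-freeness is preserved, and any proper $3$-coloring of $\overline{\Gamma}$ restricts to a proper $3$-coloring of $\overline{\Gamma_i}$. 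Consequently, the proposed deletion sequence is legal at every stage.

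The process terminates at $\Gamma_k = \Gamma$ because $\Gamma$ is an MCPG: by definition, deleting any further edge violates one of (i)--(iii), and Lemma~\ref{Minimality of MCPGs} reinforces this by ensuring that no proper spanning subgraph of $\Gamma$ satisfies all three conditions simultaneously. So the sequence both reaches $\Gamma$ and cannot be extended past it, which is exactly the statement of the corollary. I do not expect a real obstacle here: the argument is essentially a packaging of the fact that conditions (ii) and (iii) are hereditary under passage to subgraphs of the complement, while connectivity is preserved when edges are added, together with the defining minimality of MCPGs.
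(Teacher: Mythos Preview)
Your argument is correct and is essentially the same as the paper's: the paper's one-line proof invokes Lemma~\ref{Minimality of MCPGs} together with the observation that any spanning supergraph of an MCPG still satisfies (i)--(iii), which is exactly the monotonicity fact you spell out explicitly. You have simply unpacked that fact (connectivity is inherited upward, while triangle-freeness and $3$-colorability of the complement are inherited downward) in more detail than the paper does.
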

\begin{proof}
    This follows from the above minimality lemma and also from the fact that a graph containing a minimal prime graph as a subgraph is itself a connected prime graph. 
\end{proof}

\section{Generating Prime Graphs}

In this section, we will be considering what kind of graphs can be generated from other graphs in $\mathcal{C},\mathcal{G}$, and (more broadly) $\hat{\mathcal{G}}$. First, we define what it means for a graph to be generated from another. 

\begin{definition}
    We say a prime graph $\Gamma^*$ can be \textbf{generated} from another graph $\Gamma$ if we can construct $\Gamma^*$ from $\Gamma$ by adding one new vertex $\alpha$ to $\Gamma$ and a collection of edges $\alpha \beta_i$, where $\beta_i \in \Gamma$ for all $i$. In other words, $|V(\Gamma^*)| = |V(\Gamma)|+1$ and $\Gamma^*[V(\Gamma)] = \Gamma$, where $\Gamma^*[V(\Gamma)]$ is the induced subgraph of $\Gamma^*$ on the vertices of $\Gamma$.
\end{definition}

\begin{lemmaN} \label{MPG to MPG construction}
    If $\Gamma \in \mathcal{G}$, then $\Gamma$ can be used to generate another graph $\Gamma^* \in \mathcal{G}$.
\end{lemmaN}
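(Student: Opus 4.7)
The plan is to adjoin to $\Gamma$ a single new vertex $\alpha$ whose neighborhood is prescribed by a carefully chosen proper $3$-coloring of $\overline{\Gamma}$, so that conditions (i)--(iii) of being a minimal prime graph hold automatically, and the minimality condition (iv) reduces, for each new edge, to exhibiting a triangle in the complement after its removal.

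\emph{Step 1 (a good $3$-coloring).} Since $\overline{\Gamma}$ has chromatic number exactly $3$ (proved in \cite{2015_REU_Paper} for minimal prime graphs), some proper $3$-coloring $(C_1,C_2,C_3)$ exists. Starting from any such coloring, I would iteratively move to $C_1$ any vertex $v\notin C_1$ that has no $\overline{\Gamma}$-neighbor in $C_1$. Each such move preserves properness of the coloring and strictly increases $|C_1|$, so the procedure terminates at a coloring in which $C_1$ is a maximal independent set of $\overline{\Gamma}$. The key subtlety is that $C_2$ or $C_3$ can never be emptied along the way, since that would produce a proper $2$-coloring of the $3$-chromatic graph $\overline{\Gamma}$, a contradiction.

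\emph{Step 2 (construction and properties (i)--(iii)).} Define $\Gamma^*$ by adjoining $\alpha$ and setting $N_{\Gamma^*}(\alpha)=V(\Gamma)\setminus C_1$. Since $\overline{\Gamma}$ has edges, $C_1\neq V(\Gamma)$, so $\alpha$ has at least one neighbor and (i) holds. For (ii), any triangle of $\overline{\Gamma^*}$ must involve $\alpha$, whose neighbors in $\overline{\Gamma^*}$ are exactly $C_1$; since $C_1$ is independent in $\overline{\Gamma}$, no such triangle is possible. For (iii), extend the coloring from Step~1 by assigning $\alpha$ color $2$: its complement-neighbors all lie in $C_1$ and carry color $1$, so the extended coloring is proper.

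\emph{Step 3 (minimality).} For an original edge $e$ of $\Gamma$, minimality of $\Gamma$ forces $\overline{\Gamma}+e$ to either contain a triangle or fail to be $3$-colorable, and either obstruction persists in $\overline{\Gamma^*}+e=\overline{\Gamma^*-e}$ because $\overline{\Gamma}$ sits in $\overline{\Gamma^*}$ as an induced subgraph. For a new edge $\alpha v$ with $v\in V(\Gamma)\setminus C_1$, the maximality of $C_1$ supplies a $w\in C_1$ with $vw\in E(\overline{\Gamma})$, and then $\{\alpha,v,w\}$ is a triangle in $\overline{\Gamma^*-\alpha v}$, violating (ii). I expect Step~1 to be the main obstacle: one must verify that the iterative recoloring procedure terminates and that no color class is emptied along the way, which is precisely where $\chi(\overline{\Gamma})=3$ is needed.
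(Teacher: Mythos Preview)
Your argument is correct. The paper does not supply its own proof here---it simply cites Proposition~3.1 of \cite{2015_REU_Paper}---so there is nothing in the present text to compare against directly. Your construction (adjoin $\alpha$ with $\overline{\Gamma^*}$-neighborhood equal to a color class $C_1$ that has been enlarged to a maximal independent set of $\overline{\Gamma}$) is the natural one, and the verification of minimality is sound: for old edges, the obstruction from $\Gamma$ passes to $\Gamma^*$ because $\overline{\Gamma}+e$ sits inside $\overline{\Gamma^*}+e$ as an induced subgraph; for new edges $\alpha v$, the domination property of $C_1$ (guaranteed by the termination condition of your recoloring) supplies the third vertex of a triangle.

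One small remark: the ``subtlety'' you flag in Step~1---that neither $C_2$ nor $C_3$ is ever emptied---is true but not actually needed anywhere in Steps~2--3. All you use later is that $C_1\neq V(\Gamma)$ (so $\alpha$ has a neighbor), which already follows from $\overline{\Gamma}$ having an edge, and that every $v\notin C_1$ has a $\overline{\Gamma}$-neighbor in $C_1$, which is exactly your termination condition. So the dependence on $\chi(\overline{\Gamma})=3$ is lighter than your write-up suggests: $\chi\geq 2$ suffices for the construction, and $\chi=3$ is used only to know a $3$-coloring exists to begin with.
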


\begin{proof}
    This is proved as Proposition 3.1 in \cite{2015_REU_Paper}.
\end{proof}

\begin{lemmaN} \label{non-MPG MCPG to non-MPG MCPG construction}
    If $\Gamma \in \mathcal{C}$, then $\Gamma$ can be used to generate another graph $\Gamma^* \in \mathcal{C}$.
\end{lemmaN}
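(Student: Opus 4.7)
The plan is to produce $\Gamma^\ast$ by enlarging one of the two cliques of $\Gamma = B_{m,n}$ by a single new vertex, chosen so that the resulting graph is again a complete bridge graph in $\mathcal{C}$. Write $A$ for the vertex set of the $K_m$-side and $B$ for that of the $K_n$-side, and let $uv$ with $u\in A$, $v\in B$ denote the bridge edge. My default construction is to add a new vertex $\alpha$ joined to every vertex of $A$. Then $A\cup\{\alpha\}$ induces $K_{m+1}$, the clique on $B$ is unchanged, and $uv$ remains the only edge between the two sides, so $\Gamma^\ast = B_{m+1,n}$.

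To conclude $\Gamma^\ast\in\mathcal{C}$ I invoke Lemma \ref{2-complete graphs are MCPG}, which characterizes membership in $\mathcal{C}$ by the conditions $m\ge n>1$ or $(m,n)\in\{(1,1),(2,1)\}$. If the input satisfies $m\ge n>1$, then $m+1\ge n>1$ and $\Gamma^\ast=B_{m+1,n}\in\mathcal{C}$. If $(m,n)=(1,1)$, the same construction produces $\Gamma^\ast=B_{2,1}$, which is again in $\mathcal{C}$.

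The only exceptional case is $(m,n)=(2,1)$, where the default construction would yield $B_{3,1}\notin\mathcal{C}$. For this single case I modify the construction by attaching $\alpha$ to the unique vertex of $B$ instead of to the vertices of $A$. Then $B\cup\{\alpha\}$ becomes a $K_2$ and $\Gamma^\ast=B_{2,2}$, which lies in $\mathcal{C}$ because $2\ge 2>1$. Thus in every possible input the procedure produces some $\Gamma^\ast\in\mathcal{C}$.

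There is no real obstacle here; the argument is just a case split over the three families of $\mathcal{C}$ identified in Lemma \ref{2-complete graphs are MCPG}. The only point worth flagging is the asymmetric exceptional case $(m,n)=(2,1)$, where enlarging the larger clique escapes $\mathcal{C}$ and one must instead grow the smaller clique; recognizing and handling this is the whole content of the proof.
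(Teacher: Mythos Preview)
Your proof is correct and follows essentially the same approach as the paper: enlarge one of the two cliques of $B_{m,n}$ by a single vertex and verify via Lemma~\ref{2-complete graphs are MCPG} that the result is again in $\mathcal{C}$, with the case $(m,n)=(2,1)$ singled out as the one situation where the ``wrong'' clique must not be enlarged. Your case analysis is in fact slightly more explicit than the paper's, but the underlying idea is identical.
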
 

\begin{proof} 
    First note that by Theorem \ref{2-complete graphs are the set difference} that $\Gamma$ is a bridge graph $B_{m,n}$ with $m, n > 1$ or $m= 1,2, n=1$. Then take one of the complete induced subgraphs, either of size $m, n$ and increase its size by one. Except in the case where $m= 2, n=1$ and the subgraph chosen is the one of size $n$, this produces a valid new bridge graph by Lemma \ref{2-complete graphs are MCPG}.
\end{proof}

\begin{figure}
    \centering
\begin{tikzpicture}

  [scale=.7,auto=left,every node/.style={circle,fill=black!20}]
  \node (n6) at (3,10) {5};
  \node (n4) at (5,7.5)  {1};
  \node (n5) at (8,7.5)  {4};
  \node (n1) at (10,10) {2};
  \node (n2) at (10,5)  {6};
  \node (n3) at (3,5)  {3};
  \node (n7) at (1,7.5) {$\alpha$};

  \foreach \from/\to in {n6/n4,n4/n5,n5/n1,n1/n2,n2/n5,n6/n3,n3/n4,n7/n6,n7/n4,n7/n3}
    \draw (\from) -- (\to);
\end{tikzpicture}
\caption{This is an example of a graph that can be generated from the MCPG in Figure 1. Here, we introduce the new point $\alpha$, which we connect with all the odd-labeled vertices in the left induced subgraph to keep it complete.}
\end{figure}
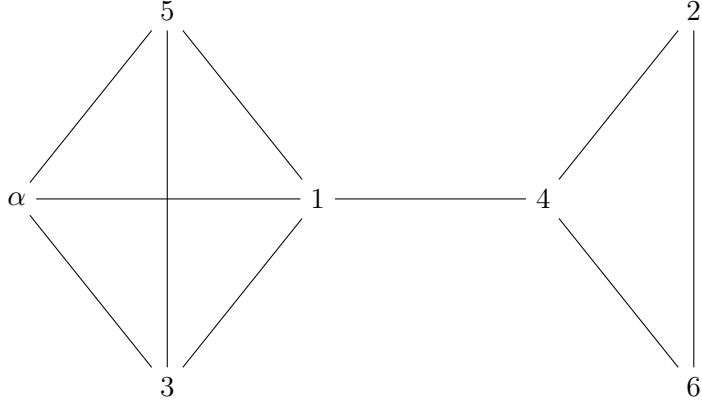

\begin{corollaryN} \label{MCPG to MCPG construction}
    If $\Gamma \in \hat{\mathcal{G}}$, then $\Gamma$ can be used to generate another graph $\Gamma^* \in \hat{\mathcal{G}}$. 
\end{corollaryN}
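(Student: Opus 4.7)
The plan is to observe that Corollary 3.4 is essentially a case analysis that reduces to the two preceding lemmas (Lemma 3.2 for minimal prime graphs and Lemma 3.3 for the elements of $\mathcal{C}$), glued together by the classification result in Theorem 2.5. Concretely, given any $\Gamma \in \hat{\mathcal{G}}$, I would first invoke Lemma 2.4 to split into the two exhaustive cases $\Gamma \in \mathcal{G}$ and $\Gamma \in \hat{\mathcal{G}} \setminus \mathcal{G}$.

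In the first case, $\Gamma$ is already a minimal prime graph, so Lemma 3.2 directly produces a graph $\Gamma^* \in \mathcal{G}$ generated from $\Gamma$ in the required sense (adding one vertex and some edges to $\Gamma$). Since $\mathcal{G} \subseteq \hat{\mathcal{G}}$ by Lemma 2.4, we have $\Gamma^* \in \hat{\mathcal{G}}$, as desired.

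In the second case, Theorem 2.5 tells us that $\Gamma$ must be a complete bridge graph $B_{m,n}$ with $m \geq n > 1$ or with $m \in \{1,2\}$, $n = 1$; that is, $\Gamma \in \mathcal{C}$. Lemma 3.3 then yields a graph $\Gamma^* \in \mathcal{C}$ generated from $\Gamma$, and since $\mathcal{C} \subseteq \hat{\mathcal{G}}$ (every complete bridge graph in $\mathcal{C}$ is an MCPG by Lemma 2.2), we again conclude $\Gamma^* \in \hat{\mathcal{G}}$.

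Because both lemmas have already been proved and the classification of $\hat{\mathcal{G}} \setminus \mathcal{G}$ in Theorem 2.5 is complete, there is no real obstacle in this corollary beyond verifying that the case division is exhaustive and that the ``generation'' construction from the two lemmas fits the definition given at the start of Section 3 (which it does verbatim in both cases). The entire argument can thus be written in just a few lines as a case split.
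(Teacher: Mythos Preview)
Your proposal is correct and follows essentially the same approach as the paper: a case split via Theorem~2.5 into $\Gamma \in \mathcal{G}$ and $\Gamma \in \mathcal{C}$, followed by applying Lemma~3.2 and Lemma~3.3 respectively, together with $\mathcal{G}, \mathcal{C} \subseteq \hat{\mathcal{G}}$. The paper's proof is even terser but identical in content.
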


\begin{proof}
    Suppose $\Gamma$ is a MCPG. By Theorem \ref{2-complete graphs are the set difference}
    we know that $\Gamma$ is either a minimal prime graph or a complete bridge graph. If $\Gamma \in \mathcal{G}$, then a new minimal prime graph $\Gamma^*$ can be generated from $\Gamma$ by 
    \ref{MPG to MPG construction}, which is also a MCPG.
     If $\Gamma \in \mathcal{C}$, then a similar result follows from Lemma \ref{non-MPG MCPG to non-MPG MCPG construction}. 
\end{proof}

Corollary \ref{MCPG to MCPG construction} used the fact that graphs in $\mathcal{G}$ can be used to generate other graphs in $\mathcal{G}$ and similarly for graphs in $\mathcal{C}$, but this provides us with the question: Can a graph in $\mathcal{G}$ be used to generate a graph in $\mathcal{C}$? And how about in the reverse direction? We answer these questions in the following two lemmas.

\begin{lemmaN} \label{non-MPG MCPG to MPG construction}
    If $\Gamma \in \mathcal{C}$, where the two complete subgraphs of $\Gamma$ have at least two vertices each, then $\Gamma$ can be used to generate another graph $\Gamma^* \in \mathcal{G}$. 
\end{lemmaN}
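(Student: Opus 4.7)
Write $\Gamma = B_{m,n}$ with cliques $A$ (of size $m \geq 2$) and $B$ (of size $n \geq 2$) joined by the bridge edge $uv$, where $u \in A$ and $v \in B$. My plan is to construct $\Gamma^*$ by attaching a single new vertex $\alpha$ whose neighborhood is exactly $(A \setminus \{u\}) \cup (B \setminus \{v\})$; that is, $\alpha$ is adjacent to every non-bridge vertex of $\Gamma$, and non-adjacent to both $u$ and $v$. I will then argue directly that this $\Gamma^*$ is a minimal prime graph.

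First I would verify that $\Gamma^*$ is a prime graph. Connectedness is immediate. For the complement, observe that $\overline{\Gamma^*}$ consists of the complete bipartite graph between $A$ and $B$ with the single edge $uv$ deleted, together with the two extra edges $\alpha u$ and $\alpha v$. The $A$--$B$ part is bipartite, hence triangle-free; any triangle of $\overline{\Gamma^*}$ must therefore involve $\alpha$, but $\alpha$ has only two complement-neighbors $u$ and $v$, and $uv$ itself is an edge of $\Gamma^*$ and hence absent from the complement. So $\overline{\Gamma^*}$ is triangle-free. The assignment $A \mapsto 1$, $B \mapsto 2$, $\alpha \mapsto 3$ gives a proper $3$-coloring of $\overline{\Gamma^*}$, since every complement-edge runs either between $A$ and $B$ or between $\alpha$ and one of $u, v$.

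For minimality I would check, for each edge $xy$ of $\Gamma^*$, that there exists a vertex $z$ non-adjacent to both $x$ and $y$, so that deleting $xy$ creates a triangle $\{x, y, z\}$ in the complement. For the bridge $uv$ take $z = \alpha$; for any edge $ua$ with $a \in A \setminus \{u\}$ take any $z \in B \setminus \{v\}$, which exists since $n \geq 2$; for an edge $aa'$ with $a, a' \in A \setminus \{u\}$ take any $z \in B$; the cases of edges inside $B$ are symmetric, using $m \geq 2$; for an edge $\alpha a$ with $a \in A \setminus \{u\}$ take $z = v$; and for an edge $\alpha b$ with $b \in B \setminus \{v\}$ take $z = u$. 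This exhausts the edge set of $\Gamma^*$ and establishes condition (iv) in the definition of a minimal prime graph.

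The main design choice, and the only place where care is needed, is the balancing act in specifying $\alpha$'s neighborhood. If $\alpha$ fails to be simultaneously non-adjacent to $u$ and to $v$, then removing the bridge $uv$ leaves no common non-neighbor to produce a complement triangle, and since the complement stays essentially bipartite the $3$-colorability condition (iii) is not violated either, so minimality fails. On the other hand, if $\alpha$ is non-adjacent to any non-bridge vertex $a \in A \setminus \{u\}$ (or symmetrically $b \in B \setminus \{v\}$), then together with any $b \in B \setminus \{v\}$ an unwanted complement triangle $\alpha ab$ appears immediately, since $ab$ is already a complement-edge. The adjacency pattern above is the unique one forced by these two constraints, and once it is fixed the remaining verifications reduce to the routine case checks indicated.
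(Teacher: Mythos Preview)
Your construction is exactly the one the paper uses: attach a new vertex $\alpha$ adjacent to every non-bridge vertex, and your verification that each edge deletion creates a complement triangle matches the paper's case analysis. The only organizational difference is that the paper first places $\Gamma^*$ in $\hat{\mathcal{G}}$ and then separately checks that no edge removal disconnects it (so $\Gamma^*\notin\mathcal{C}$, whence $\Gamma^*\in\mathcal{G}$), whereas you verify the minimal-prime-graph conditions directly; this is a minor streamlining, not a different approach.
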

\begin{proof} 
    Let $\Gamma \in \mathcal{C}$, as defined in the proposition. We introduce a new vertex $\alpha$ and we add every edge $\alpha \beta$, where $\beta \in V(\Gamma)$, excluding the bridge vertices $p$ and $q$. Let this new graph be $\Gamma^*$, which we will show is in $\hat{\mathcal{G}}\setminus\mathcal{C}$.
    \\
    \\
    First, we will show $\Gamma^* \in \hat{\mathcal{G}}$. Clearly, $\Gamma^*$ is connected because $\Gamma$ is connected and the vertex $\alpha$ has at least two edges by our specifications on $\Gamma$. Also, we know $E(\overline{\Gamma^*}) = E(\overline{\Gamma}) \cup \{\alpha p\} \cup \{\alpha q\}$. The complement graph $\overline{\Gamma}$ is triangle-free because $\Gamma \in \mathcal{C}$, and $\alpha, p$, and $q$ do not form a triangle because $pq \not\in E(\overline{\Gamma})$. Hence, we conclude $\overline{\Gamma^*}$ is triangle-free. Additionally, it is easy to show that $\overline{\Gamma^*}$ must be 3-colorable, which we know is a necessary property of minimal prime graphs by Lemma 3.2 in the prime graphs paper. By Theorem \ref{2-complete graphs are the set difference}, we know that $\Gamma$ is a 2-connected graph, which implies that $\overline{\Gamma}$ is 2-colorable. Because $\alpha$ is connected to vertices in both of the complete subgraphs, it must be a different color than the two used to color the points in the complement subgraphs. Hence, $\overline{\Gamma^*}$ is three-colorable. Now, consider the graph $\Gamma^* \setminus \{ab\}$. If $ab \in E(\Gamma)$ and $ab \neq pq$, then we know $\overline{\Gamma^*}$ is not triangle-free because this will induce a triangle in the subgraph $\overline{\Gamma}$. Next, suppose $ab=pq$. This results in the triangle with vertices $\alpha, p$, and $q$ in the complement graph. Finally, suppose $ab \in E(\Gamma^*)\setminus E(\Gamma)$. Hence, we can say $a=\alpha$. If $b$ is in the complete induced subgraph of $\Gamma$ containing $p$, then this creates the triangle of vertices $a,b,$ and $q$ in the complement graph. If $b$ is in the other complete induced subgraph, then we get the triangle of vertices $a,b,$ and $p$. Thus, removing any edge $ab$ from $\Gamma^*$ results in a triangle in $\overline{\Gamma^* \setminus \{ab\}}$, so we can conclude $\Gamma^* \in \hat{\mathcal{G}}$.
    \\
    \\
    Finally, we must show that $\Gamma^* \setminus \{ab\}$ does not result in a disconnected graph for any $ab \in E(\Gamma^*)$. If we consider the subgraph $\Gamma$, $pq$ is the only edge whose removal results in a disconnected graph. This means that $\Gamma^* \setminus \{ab\}$, where $ab \in E(\Gamma)$ and $ab \neq pq$, is still connected. Also, we find $\Gamma^* \setminus \{pq\}$ is still connected because $\alpha$ is connected to the two complete induced subgraphs of $\Gamma$ at a minimum of one vertex each. Finally, we consider $\Gamma^* \setminus\{ab\}$, where $ab \in E(\Gamma^*)\setminus E(\Gamma)$ (so, once again, we can let $a=\alpha$). This graph is still connected because the subgraph $\Gamma$ remains connected and $\alpha$ is connected to at least two other vertices. Hence, we see that removing any edge of $\Gamma^*$ does not result in a disconnected graph. However, for any graph in $\mathcal{C}$, there is an edge whose removal results in a disconnected graph, so we deduce $\Gamma^* \not\in \mathcal{C}$.
    \\
    \\
    It follows from the results above that $\Gamma^* \in \hat{\mathcal{G}}\setminus \mathcal{C} = \mathcal{G}$ (by Theorem \ref{2-complete graphs are the set difference}) and $\Gamma^*$ has one more vertex than $\Gamma$.
\end{proof}

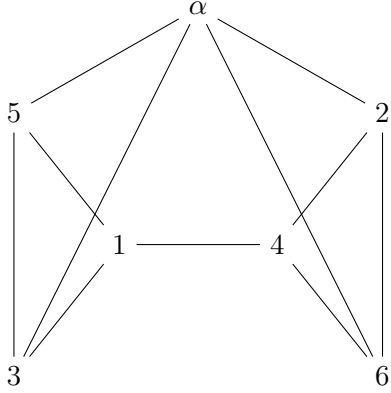
\begin{figure}
    \centering
\begin{tikzpicture}
  [scale=.7,auto=left]
  \node (n6) at (3,10) {5};
  \node (n4) at (5,7.5)  {1};
  \node (n5) at (8,7.5)  {4};
  \node (n1) at (10,10) {2};
  \node (n2) at (10,5)  {6};
  \node (n3) at (3,5)  {3};
  \node (n7) at (6.5,12) {$\alpha$};

  \foreach \from/\to in {n6/n4,n4/n5,n5/n1,n1/n2,n2/n5,n6/n3,n3/n4,n7/n6,n7/n1,n7/n2,n7/n3}
    \draw (\from) -- (\to);
\end{tikzpicture}
\caption{This is a graph in $\mathcal{G}$ that can be generated from the graph in Figure 1 through the method used in the proof of Lemma \ref{non-MPG MCPG to MPG construction}.}
\end{figure}

By the formulation of the lemma, the smallest MCPG that we can apply this construction method to is $B_{2,2}$. Whenever we do so, this produces the 5-cycle, which is the smallest MPG. In the following lemma, we show that there is no analogous construction in the reverse direction.

\begin{lemmaN} \label{No construction from MPG to non-MPG MCPG}
If $\Gamma \in \mathcal{G}$, then $\Gamma$ cannot be used to generate some graph $\Gamma^* \in \mathcal{C}$. 
\end{lemmaN}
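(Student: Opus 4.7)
The plan is to argue by contradiction: suppose $\Gamma \in \mathcal{G}$ generates some $\Gamma^* \in \mathcal{C}$ by adding a new vertex $\alpha$ together with some incident edges. By Theorem \ref{2-complete graphs are the set difference} together with Lemma \ref{2-complete graphs are MCPG}, every element of $\mathcal{C}$ is a complete bridge graph, so we may write $\Gamma^* = B_{m,n}$ with bridge edge $pq$, where $p$ lies in the $K_m$-part and $q$ in the $K_n$-part. Since $\Gamma = \Gamma^*[V(\Gamma^*)\setminus\{\alpha\}]$, the whole argument reduces to a short case analysis on where $\alpha$ sits inside $B_{m,n}$.

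The first case to dispose of is that in which $\alpha$ is one of the bridge vertices $p$ or $q$: then deleting $\alpha$ also removes the bridge edge and leaves $\Gamma$ as a disjoint union $K_{m-1}\sqcup K_n$ or $K_m \sqcup K_{n-1}$. This is disconnected, violating condition (i) of the minimal prime graph definition and contradicting $\Gamma \in \mathcal{G}$. The small boundary instance $\Gamma^* = B_{1,1}$ is subsumed here, since then $\Gamma$ would be a single vertex and so cannot lie in $\mathcal{G}$ at all.

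In the remaining case $\alpha$ is a non-bridge vertex inside one of the two complete subgraphs, so $pq$ survives and $\Gamma$ is itself a complete bridge graph $B_{m',n'}$ with $m'+n' \geq 2$. The crux of the argument --- and the step I expect to carry the whole proof --- is to show that no complete bridge graph is a minimal prime graph. The clean observation is that deleting the bridge edge of $B_{m',n'}$ produces the disjoint union $K_{m'}\sqcup K_{n'}$, whose complement is the complete bipartite graph $K_{m',n'}$; this complement is triangle-free and in fact $2$-colorable, so conditions (ii) and (iii) both continue to hold after the removal. Hence the MPG requirement that deleting any edge violate (ii) or (iii) fails at the bridge edge, so $B_{m',n'} \notin \mathcal{G}$, contradicting $\Gamma \in \mathcal{G}$ and completing the argument.
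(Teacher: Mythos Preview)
Your argument is essentially correct but follows a different, more hands-on route than the paper. The paper's proof is a one-line chromatic-number argument: by Lemma~3.2 of \cite{2015_REU_Paper} the complement of any $\Gamma\in\mathcal{G}$ has chromatic number exactly $3$, whereas (as noted in the proof of Lemma~\ref{2-complete graphs are MCPG}) the complement of any $\Gamma^*\in\mathcal{C}$ is $2$-colorable; since $\overline{\Gamma}$ is an induced subgraph of $\overline{\Gamma^*}$, a $2$-coloring of the latter restricts to one of the former, a contradiction. This sidesteps all casework. Your structural case analysis has the virtue of being self-contained (it never invokes the chromatic-number-$3$ lemma), but watch one small boundary case you did not cover: if $\Gamma^*=B_{2,1}$ and $\alpha$ is the lone vertex on the $n=1$ side, then $\Gamma=K_m\sqcup K_{n-1}=K_2\sqcup K_0=K_2$, which is \emph{connected}, so the disconnectedness claim in your Case~1 fails there. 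The patch is immediate---$K_2=B_{1,1}$, and your own Case~2 reasoning (removing the bridge edge leaves a graph whose complement is complete bipartite, hence triangle-free and $2$-colorable, so neither (ii) nor (iii) is violated) shows $K_2\notin\mathcal{G}$.
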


\begin{proof}
Recall that by Lemma 3.2 in \cite{2015_REU_Paper} we know that the complement of any graph in $\mathcal{G}$ has chromatic number 3, whereas complements of graphs in $\mathcal{C}$ are 2-colorable. This immediately implies the assertion.
\end{proof}

To summarize some our results on generation of graphs, we let $A \rightarrow B$ denote that "a prime graph in set $A$ can be used to generate a prime graph in set $B$." Clearly, $A \not\rightarrow B$ denotes "a prime graph in set $A$ cannot be used to generate a prime graph in set $B$." This leads us to the main theorem of this section.

\begin{theoremN} \label{Collecting Construction Lemmas}
    The following list shows which types of primes graphs can be used to generate other kinds of prime graphs:
    \begin{enumerate}
        \item  $\mathcal{G} \rightarrow \mathcal{G}$
        \item  $\mathcal{C} \rightarrow \mathcal{C}$
        \item  $\mathcal{C} \rightarrow \mathcal{G}$
        \item  $\mathcal{G} \nrightarrow \mathcal{C}$.
    \end{enumerate}
\end{theoremN}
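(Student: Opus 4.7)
The plan is to observe that this theorem is essentially a bookkeeping assembly of the four lemmas immediately preceding it, so the proof amounts to citing each of them in turn. Statement (1), $\mathcal{G} \to \mathcal{G}$, is immediate from Lemma~\ref{MPG to MPG construction} (which itself invokes Proposition~3.1 of \cite{2015_REU_Paper}). Statement (2), $\mathcal{C} \to \mathcal{C}$, is exactly Lemma~\ref{non-MPG MCPG to non-MPG MCPG construction}: given any complete bridge graph $B_{m,n}$ in $\mathcal{C}$, enlarging one of its complete parts by a single vertex produces another complete bridge graph in $\mathcal{C}$, with Lemma~\ref{2-complete graphs are MCPG} handling the small boundary cases.

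For statement (3), $\mathcal{C} \to \mathcal{G}$, I would cite Lemma~\ref{non-MPG MCPG to MPG construction}. Here a small point of care is warranted: that lemma requires both parts of the bridge graph to have at least two vertices, so its construction does not directly apply to $B_{1,1}$ or $B_{2,1}$. However, the arrow notation $A \to B$ is defined to mean only that \emph{some} graph in $A$ can be used to generate \emph{some} graph in $B$, so exhibiting any single example suffices — for instance $B_{2,2}$, which under the construction of Lemma~\ref{non-MPG MCPG to MPG construction} yields the $5$-cycle, the smallest element of $\mathcal{G}$.

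Finally, statement (4), $\mathcal{G} \not\to \mathcal{C}$, is Lemma~\ref{No construction from MPG to non-MPG MCPG}. The conceptual content is that for any $\Gamma \in \mathcal{G}$ and any $\Gamma^*$ generated from $\Gamma$, the complement $\overline{\Gamma}$ sits inside $\overline{\Gamma^*}$ as an induced subgraph, so the chromatic number cannot drop; since $\chi(\overline{\Gamma})=3$ by Lemma~3.2 of \cite{2015_REU_Paper} while every graph in $\mathcal{C}$ has $2$-colorable complement, we conclude $\Gamma^*\notin\mathcal{C}$. I do not anticipate any real obstacle — the theorem is a summary statement, and the only care needed is to verify that the existential reading of the arrow notation $A \to B$ is consistent with the universal phrasing of the contributing lemmas in cases (1), (2), and (4).
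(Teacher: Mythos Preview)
Your proposal is correct and matches the paper's approach exactly: the paper's proof is a single sentence citing Lemmas~\ref{MPG to MPG construction}, \ref{non-MPG MCPG to non-MPG MCPG construction}, \ref{non-MPG MCPG to MPG construction}, and \ref{No construction from MPG to non-MPG MCPG}. Your additional remark that Lemma~\ref{non-MPG MCPG to MPG construction} omits $B_{1,1}$ and $B_{2,1}$, and that this is harmless under an existential reading of $A\to B$, is a valid clarification that the paper itself does not make explicit.
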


\begin{proof}
The theorem follows from Lemmas \ref{MPG to MPG construction}, \ref{non-MPG MCPG to non-MPG MCPG construction}, \ref{non-MPG MCPG to MPG construction}, and \ref{No construction from MPG to non-MPG MCPG}.
\end{proof}

These are all of the interesting combinations of the graph classes. Also, we remind the reader that Corollary \ref{MCPG to MCPG construction} shows that any MCPG can be used to generate another MCPG, and we can deduce by Theorem \ref{2-complete graphs are the set difference} and Theorem \ref{Collecting Construction Lemmas} that any MCPG can be used to generate a graph in $\mathcal{G}$, but it is not the case that any MCPG can be used to generate a new graph in $\mathcal{C}$.

\section{The Diameters of Minimally Connected Prime Graphs}

Recall that the diameter of a graph is the maximum distance between a pair of vertices in the graph, where distance is the length of the shortest path. We take as convention that disconnected graphs have infinite diameter. We first consider the diameter of graphs in $\mathcal{C}$.

\begin{propositionN} \label{MCPG diameter}
    If $\Gamma \in \mathcal{C}$, then $\diam(\Gamma) \leq 3$. More precisely, $\diam(B_{1,1}) =1, \diam(B_{2,1})=2$, and $\diam(B_{m,n}) =3$ for $m \geq n >1$.
\end{propositionN}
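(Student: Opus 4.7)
The plan is to split into the three cases according to the definition of $\mathcal{C}$ from Lemma~\ref{2-complete graphs are MCPG}, and in each case exploit the fact that $B_{m,n}$ consists of two cliques joined by a single edge $pq$, where $p$ lies in the $m$-clique and $q$ in the $n$-clique. The two small cases $B_{1,1}$ and $B_{2,1}$ I would simply dispatch by inspection: $B_{1,1}$ is a single edge, hence has diameter $1$; and $B_{2,1}$ is a path on three vertices whose two endpoints are at distance $2$, so its diameter is $2$.

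For the main case $B_{m,n}$ with $m \geq n > 1$, I would first establish the upper bound $\diam(B_{m,n}) \leq 3$. Any two vertices on the same side are adjacent because each side is a clique, giving distance $1$. For vertices $u$ in the $m$-clique and $w$ in the $n$-clique, the path $u \to p \to q \to w$ (shortening whenever $u = p$ or $w = q$) always has length at most $3$, so every pair of vertices is within distance $3$.

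For the lower bound, I would exhibit a pair achieving distance $3$: pick $v$ in the $m$-clique with $v \neq p$, and $w$ in the $n$-clique with $w \neq q$; such vertices exist because $m, n \geq 2$. The key structural observation is that the bridge edge $pq$ is the \emph{only} edge between the two cliques, so every $v$--$w$ walk must traverse $pq$. Hence any $v$--$w$ path has the form $v \to \cdots \to p \to q \to \cdots \to w$; since $v \neq p$ and $w \neq q$, the portion before $p$ contributes at least one edge and the portion after $q$ contributes at least one edge, for a total of at least three edges. Combined with the upper bound this gives $\diam(B_{m,n}) = 3$.

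The argument is essentially a direct case analysis, and no genuine obstacle is expected; the only subtlety worth writing out carefully is the lower bound in the last case, specifically the insistence that $v$ and $w$ be chosen distinct from the bridge vertices (which is precisely where the hypothesis $m, n > 1$ is used). This is also what makes the small cases $B_{1,1}$ and $B_{2,1}$ genuinely exceptional rather than covered by the general argument.
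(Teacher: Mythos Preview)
Your proposal is correct and is essentially the direct verification the paper has in mind; the paper's own proof is simply ``This is immediately checked by the reader,'' so your case analysis is just a fully written-out version of that same inspection.
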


\begin{proof} This is immediately checked by the reader.
    
    

    

\end{proof}

We can conclude our study of the diameters of MCPGs by using an important result from Lucido. 

\begin{lemmaN} \label{MCPG diameter upper-bound}
    If $\Gamma \in \hat{\mathcal{G}}$, then $\diam(\Gamma)\leq 3$.
\end{lemmaN}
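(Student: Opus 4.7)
The plan is essentially a one-step argument leveraging existing results. By the characterization of prime graphs of finite solvable groups in \cite{2015_REU_Paper}, a graph is the prime graph of some finite solvable group if and only if its complement is triangle-free and 3-colorable. Any $\Gamma \in \hat{\mathcal{G}}$ satisfies conditions (ii) and (iii) of the MCPG definition, and hence is the prime graph of some finite solvable group $G$. So I would first observe that the hypothesis places $\Gamma$ within the class to which Lucido's theorem applies.

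Second, I would invoke Lucido's theorem (the 1999 result alluded to in the preceding commented-out passage), which asserts that the prime graph of any finite solvable group has diameter at most $3$. Combined with the previous observation, this immediately gives $\diam(\Gamma) \leq 3$.

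There is no real obstacle here: the minimality condition (iv) in the definition of MCPG is not needed for the bound, only conditions (i)--(iii) are used. In fact, as Proposition \ref{MCPG diameter} already verifies the bound (with equality possible) for the subfamily $\mathcal{C}$, the only remaining content is that the general MCPG case is not worse, which is exactly Lucido's content. I would end the proof with a one-line citation of Lucido's theorem, noting also that the bound is sharp since $B_{m,n}$ with $m\geq n > 1$ attains diameter $3$ by Proposition \ref{MCPG diameter}.
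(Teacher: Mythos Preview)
Your proposal is correct and follows essentially the same approach as the paper: invoke the characterization from \cite{2015_REU_Paper} to realize $\Gamma$ as the prime graph of a finite solvable group, then apply Lucido's diameter bound from \cite{Lucido1999}. Your additional remarks about the sharpness via $B_{m,n}$ and the irrelevance of condition (iv) are accurate but not part of the paper's proof.
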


\begin{proof}
    Because $\Gamma \in \hat{\mathcal{G}}$, we know that $\overline{\Gamma}$ is triangle-free and 3-colorable. By Theorem 2 in \cite{2015_REU_Paper}, we know that there is some solvable group $G$ such that $\Gamma$ is isomorphic to the prime graph of $G$. Lucido showed in \cite{Lucido1999} 
    that the diameter of the prime graph of a solvable group is less than or equal to 3, which establishes the lemma.
\end{proof}

\begin{theoremN} \label{diameter 3 implies 2-colorable}
    For any prime graph $\Gamma$ with $\diam(\Gamma) = 3$, $\overline{\Gamma}$ is 2-colorable.
\end{theoremN}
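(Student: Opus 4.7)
The plan is to fix two vertices $u, v$ realizing $d_\Gamma(u,v) = 3$ and use them as the "anchors" of an explicit bipartition of $\overline{\Gamma}$. Since $u$ and $v$ are nonadjacent in $\Gamma$, the edge $uv$ lies in $\overline{\Gamma}$. Moreover, because their distance in $\Gamma$ is at least $3$, no vertex $w \neq u,v$ can be adjacent to both $u$ and $v$ in $\Gamma$ (that would give a length-$2$ path), so every such $w$ satisfies $uw \in E(\overline{\Gamma})$ or $vw \in E(\overline{\Gamma})$. This gives the key covering property: $N_{\overline{\Gamma}}(u) \cup N_{\overline{\Gamma}}(v) \supseteq V(\Gamma) \setminus \{u,v\}$.

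Next I would use that $\overline{\Gamma}$ is triangle-free. Since $uv \in E(\overline{\Gamma})$, no vertex can be adjacent to both $u$ and $v$ in $\overline{\Gamma}$ without forming a triangle, so in fact $N_{\overline{\Gamma}}(u)$ and $N_{\overline{\Gamma}}(v)$ are disjoint, and together with $\{u,v\}$ they partition $V(\Gamma)$. Set $A := N_{\overline{\Gamma}}(u) \setminus \{v\}$ and $B := N_{\overline{\Gamma}}(v) \setminus \{u\}$. Triangle-freeness then immediately forces $A$ to be an independent set in $\overline{\Gamma}$ (any edge inside $A$ together with $u$ would be a triangle), and similarly $B$ is independent.

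The proposed $2$-coloring is then $\{u\} \cup B$ versus $\{v\} \cup A$. The only edges that could spoil this are edges in $\overline{\Gamma}$ from $u$ into $B$ or from $v$ into $A$, but by the very definition of $A$ and $B$ (and disjointness of $N_{\overline{\Gamma}}(u)$, $N_{\overline{\Gamma}}(v)$) no such edges exist. Combined with the independence of $A$ and of $B$, this gives a proper $2$-coloring of $\overline{\Gamma}$, proving the theorem.

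There is essentially no serious obstacle: the proof hinges entirely on the interplay between the diameter-$3$ assumption (which guarantees the covering of the vertex set by $N_{\overline{\Gamma}}(u) \cup N_{\overline{\Gamma}}(v)$) and the triangle-free hypothesis (which forces disjointness of those neighborhoods and independence inside each). The only minor point to be careful about is the edge case where $\Gamma$ has exactly two vertices, but then $\diam(\Gamma)$ could not be $3$, so the hypothesis is never vacuously triggered. Since we are given $\Gamma$ is a prime graph (so $\overline{\Gamma}$ is triangle-free and $3$-colorable to begin with), we can use triangle-freeness freely, and the conclusion upgrades $3$-colorability to $2$-colorability precisely in the diameter-$3$ regime.
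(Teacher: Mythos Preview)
Your proof is correct and follows essentially the same approach as the paper: both fix a pair $u,v$ at distance $3$, use the distance condition to show every other vertex lies in $N_{\overline{\Gamma}}(u)\cup N_{\overline{\Gamma}}(v)$, use triangle-freeness of $\overline{\Gamma}$ together with $uv\in E(\overline{\Gamma})$ to make these neighborhoods disjoint and independent, and then read off the bipartition $\{u\}\cup N_{\overline{\Gamma}}(v)$ versus $\{v\}\cup N_{\overline{\Gamma}}(u)$. Your write-up is in fact a bit more systematic in tracking the partition and verifying the color classes, but the underlying argument is identical.
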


\begin{proof}
    By $\diam{\Gamma} = 3$, there necessarily exist points $a$, $b$ such that the distance between $a$ and $b$ is 3. 
    
    Since $a, b$ are not connected in $\Gamma$ they are necessarily connected in the complement, and thus also necessarily different colors. Let $a$ be colored green, $b$ be colored blue. 
    
    Let $v$ be any another vertex. Then if $v$ is connected to neither $a$ or $b$ in $\bar{\Gamma}$, there exists a path of length $2$ from $a$ to $b$ in $\Gamma$, $a \rightarrow v \rightarrow b$. 
    
    Suppose $v$ is connected to $a$ in $\bar{\Gamma}$. Then we can color $v$ blue. Otherwise if $v$ is connected to $b$ in $\bar{\Gamma}$, we color $v$ green. Note that $v$ cannot be connected to both, as otherwise there is a triangle $(a, b, v)$.
    
    We follow this procedure for all vertices which are not $a$ or $b$, and claim this is a valid $2$-coloring of $\bar{\Gamma}$. To see why, suppose we have two points $v_1, v_2$ which are both green under this construction. Then $v_1, v_2$ are connected to $b$ by construction. So $v_1, v_2$ cannot be connected, otherwise $\bar{\Gamma}$ contains a triangle $(v_1, v_2, b)$. A similar reasoning holds if $v_1, v_2$ are both blue. 
\end{proof}
\begin{corollaryN} \label{MPGs have diameter 2}
    For $\Gamma \in \mathcal{G}$, $\diam{\Gamma} = 2$.
\end{corollaryN}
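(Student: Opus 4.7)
The plan is to sandwich the diameter of $\Gamma$ and rule out the extreme values, so that only $2$ remains. Since $\mathcal{G} \subseteq \hat{\mathcal{G}}$, Lemma \ref{MCPG diameter upper-bound} already gives $\diam(\Gamma) \leq 3$, and a minimal prime graph has at least two vertices so $\diam(\Gamma) \geq 1$. The task therefore reduces to excluding $\diam(\Gamma) = 1$ and $\diam(\Gamma) = 3$.

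For the upper exclusion, I would invoke Theorem \ref{diameter 3 implies 2-colorable}: if $\diam(\Gamma) = 3$, then $\overline{\Gamma}$ is $2$-colorable. This contradicts the fact (Lemma 3.2 of \cite{2015_REU_Paper}, also cited in the proofs of Lemma \ref{non-MPG MCPG to MPG construction} and Lemma \ref{No construction from MPG to non-MPG MCPG}) that the complement of any minimal prime graph has chromatic number exactly $3$. For the lower exclusion, note that $\diam(\Gamma) = 1$ forces $\Gamma$ to be a complete graph, so $\overline{\Gamma}$ has no edges and chromatic number at most $1$, again contradicting $\chi(\overline{\Gamma}) = 3$.

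The hard step has already been done in Theorem \ref{diameter 3 implies 2-colorable}; the remainder is just combining that theorem with the known chromatic-number-$3$ characterization of complements of minimal prime graphs. I do not anticipate any real obstacle, and the proof should occupy only a few lines.
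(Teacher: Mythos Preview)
Your proposal is correct and follows essentially the same approach as the paper: the paper's proof simply cites Theorem \ref{diameter 3 implies 2-colorable} together with the chromatic-number-$3$ fact from \cite{2015_REU_Paper}, which is exactly your upper exclusion, with the other cases left implicit. Your explicit treatment of $\diam(\Gamma)=1$ is a harmless extra detail that the paper omits.
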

\begin{proof}
    This follows from Theorem \ref{diameter 3 implies 2-colorable} and the fact that MPGs have chromatic number 3, from \cite{2015_REU_Paper}. 
\end{proof}

\begin{lemmaN}\label{edge removal doesn't decrease diameter}
    Let $\Gamma$ be any undirected graph, and let $\Gamma'$ be attained from $\Gamma$ via removing an edge. Then $\diam(\Gamma') \geq \diam(\Gamma)$  
\end{lemmaN}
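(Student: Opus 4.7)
The plan is to exploit the fact that $\Gamma'$ is obtained from $\Gamma$ only by edge deletion, so $\Gamma'$ is a spanning subgraph of $\Gamma$ (they share the same vertex set). Consequently, every walk in $\Gamma'$ is automatically a walk in $\Gamma$, which will give me the needed monotonicity of distances.

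More concretely, I would fix an arbitrary pair of vertices $u,v \in V(\Gamma) = V(\Gamma')$ and argue that any shortest $u$--$v$ path in $\Gamma'$ is still a valid $u$--$v$ path in $\Gamma$. Hence the length of a shortest path in $\Gamma$ is no larger than the length of a shortest path in $\Gamma'$, i.e. $d_\Gamma(u,v) \leq d_{\Gamma'}(u,v)$. Taking the maximum over all pairs $(u,v)$ yields $\diam(\Gamma) \leq \diam(\Gamma')$.

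The only case that requires any comment is when removing the chosen edge disconnects $\Gamma$. In that case some pair $u,v$ has no $u$--$v$ path in $\Gamma'$, so by the convention adopted just before Proposition \ref{MCPG diameter} we have $\diam(\Gamma') = \infty$, and the desired inequality is trivially satisfied.

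There is no real obstacle here; the statement is essentially a tautology once one notes that the distance function is monotone with respect to taking spanning subgraphs. The proof should fit in two or three lines.
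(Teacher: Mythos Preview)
Your proposal is correct and follows essentially the same approach as the paper, which simply states that the result is immediate from the definition of diameter since removing edges cannot decrease it. Your version is, if anything, more explicit than the paper's one-line justification.
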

\begin{proof}
    This is immediate from considering the definition of diameter - removing edges cannot decrease the diameter, only increase it. 
\end{proof}
\begin{lemmaN} \label{Characterization of prime graphs w/ diameter 3}
    Let $\Gamma$ be a connected prime graph. Then $\diam(\Gamma) = 3$ if and only if any sequence of edge removals on $\Gamma$ to reach a MCPG ends in a complete bridge graph. 
\end{lemmaN}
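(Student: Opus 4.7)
The plan is to split the biconditional and use the diameter bounds already established in this section.

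For the forward direction, assume $\diam(\Gamma) = 3$ and let $\Gamma'$ be the MCPG reached by any valid sequence of single-edge removals from $\Gamma$ (each intermediate graph being a connected prime graph). Iterating Lemma \ref{edge removal doesn't decrease diameter} gives $\diam(\Gamma') \geq \diam(\Gamma) = 3$, and Lemma \ref{MCPG diameter upper-bound} gives $\diam(\Gamma') \leq 3$, so $\diam(\Gamma') = 3$. Corollary \ref{MPGs have diameter 2} says every minimal prime graph has diameter exactly $2$, which forces $\Gamma' \notin \mathcal{G}$. Theorem \ref{2-complete graphs are the set difference} then yields $\Gamma' \in \hat{\mathcal{G}} \setminus \mathcal{G} = \mathcal{C}$, i.e., $\Gamma'$ is a complete bridge graph, completing this direction.

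For the backward direction I would argue the contrapositive: assuming $\diam(\Gamma) \ne 3$ (hence $\diam(\Gamma) \le 2$ since the reachable MCPG has $\diam \leq 3$ by Lemma \ref{MCPG diameter upper-bound} and $\diam(\Gamma) \leq \diam(\Gamma')$ by Lemma \ref{edge removal doesn't decrease diameter}), I want to exhibit an edge-removal sequence ending at an MCPG in $\mathcal{G}$, contradicting the hypothesis. The key observation is that graphs in $\mathcal{C}$ have $2$-colorable complements (from the proof of Lemma \ref{2-complete graphs are MCPG}) while graphs in $\mathcal{G}$ have $3$-chromatic complements (Lemma 3.2 of \cite{2015_REU_Paper}). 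The strategy is therefore to remove edges of $\Gamma$ while actively preserving a $3$-chromatic witness in $\bar{\Gamma}$ --- for example, an induced odd cycle. Any such reduction is guaranteed to terminate in $\mathcal{G}$ rather than in $\mathcal{C}$.

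The main obstacle I expect is verifying that an obstruction-preserving reduction is always available. Small-vertex boundary cases (where the only MCPG on $n$ vertices happens to be a bridge graph) and near-complete $\Gamma$ will need to be inspected separately. The approach I would follow is to locate an induced $5$-cycle in $\Gamma$ whose edges witness an odd cycle structure in the complement, then to choose edge removals outside of (or at least non-destructive to) this $5$-cycle. The structural result from \cite{2015_REU_Paper} that every MPG contains an induced $5$-cycle should anchor this construction, and the characterization in Theorem \ref{2-complete graphs are the set difference} guarantees that a non-bridge MCPG terminus must lie in $\mathcal{G}$.
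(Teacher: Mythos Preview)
Your forward direction is correct and coincides with the paper's argument: the paper phrases it as the contrapositive (``if some edge-removal sequence reaches a minimal prime graph, then $\diam(\Gamma)\le 2$ by Lemma~\ref{edge removal doesn't decrease diameter} and Corollary~\ref{MPGs have diameter 2}''), but the content is identical, and both finish by invoking Theorem~\ref{2-complete graphs are the set difference}.

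For the backward direction, the paper does not actually supply an argument; after the forward implication it writes only ``the theorem follows from our discussion on the set-difference between MCPGs and MPGs.'' Your plan to preserve an odd cycle in $\overline{\Gamma}$ goes further, but the obstruction you defer as ``small-vertex boundary cases'' is genuine and in fact breaks the stated biconditional. For example $B_{2,1}$ and $B_{1,1}$ are already MCPGs in $\mathcal{C}$ with diameters $2$ and $1$ (Proposition~\ref{MCPG diameter}), and the $4$-cycle $C_4$ is a connected prime graph of diameter $2$ from which every edge removal terminates at $B_{2,2}$, since no MPG on four vertices exists. In each such case $\overline{\Gamma}$ is bipartite, so there is no $3$-chromatic witness to protect and no reduction sequence can land in $\mathcal{G}$. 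Thus the backward implication is false as written, and neither your outline nor the paper's hand-wave can close that gap without amending the hypotheses.
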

\begin{proof}
    First recall that we can remove edges until reaching a MCPG by MCPGs satisfying the proper minimality conditions in Lemma \ref{Minimality of MCPGs} and Corollary \ref{Edge removal}. 
    
    Let $\Gamma$ be a prime graph such that there exists a sequence of edge removals to reach a minimal prime graph, which has diameter 2 by \ref{MPGs have diameter 2}. Then it follows from Lemma \ref{edge removal doesn't decrease diameter} that $\diam(\Gamma) \leq 2$, so in particular $\Gamma$ cannot be a minimal prime graph. The theorem follows from our discussion on the set-difference between MCPGs and MPGs. 
\end{proof}
    
    The following theorem characterizes prime graphs of solvable groups via our results so far. 
    
\begin{theoremN} \label{classifying graphs by their diameter}
    Let $\Gamma$ be a prime graph. Then
    \begin{itemize}
        \item[(i)] $\Gamma$ is disconnected if and only if the disconnected components are complete graphs
        \item[(ii)] $\diam(\Gamma) = 1$ if and only if $\Gamma$ is a complete graph
        \item[(iii)] $\diam(\Gamma) = 3$ if and only if every sequence of edge removals towards a minimally connected prime graph results in a complete bridge graph
        \item[(iv)] Otherwise, $\diam(\Gamma) = 2$
    \end{itemize}
\end{theoremN}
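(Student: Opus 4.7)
My plan is to handle the four parts sequentially, most of which will fall out as immediate consequences of the earlier machinery, with (i) being the only one that requires a short fresh argument.

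For (i), I would first observe that if $\Gamma$ is disconnected, then the condition that $\overline{\Gamma}$ is triangle-free forces every pair of non-adjacent vertices to lie in the same component: indeed, if $u,v$ were non-adjacent and in the same component $C$, then taking any vertex $w$ in a different component would give a triangle $\{u,v,w\}$ in $\overline{\Gamma}$ (since none of $u,v,w$ are pairwise adjacent across components either). Hence within each component every pair of vertices is adjacent, i.e.\ each component is complete. The reverse direction is trivial. Part (ii) is immediate from unfolding the definition of diameter: diameter $1$ means every pair of vertices is adjacent, which is the definition of a complete graph.

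For (iii), I would simply invoke Lemma \ref{Characterization of prime graphs w/ diameter 3}, which already states precisely this equivalence. No further work is needed.

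For (iv), I would combine Lucido's bound cited in Lemma \ref{MCPG diameter upper-bound} with the previous parts. By the characterization of prime graphs of solvable groups, $\Gamma$ is the prime graph of a solvable group, so Lucido's theorem gives $\diam(\Gamma) \leq 3$ whenever $\Gamma$ is connected. Thus a connected prime graph has diameter $1$, $2$, or $3$. Having already described the diameter $1$ case in (ii) and the diameter $3$ case in (iii), and the disconnected case in (i), the only remaining possibility is diameter exactly $2$, giving (iv). I do not expect any real obstacle here; the bulk of the genuine content of the theorem lives in Lemma \ref{Characterization of prime graphs w/ diameter 3} and Theorem \ref{diameter 3 implies 2-colorable}, which have already been established, so this final theorem functions mainly as a clean synthesis of those results together with the elementary triangle-free argument for (i).
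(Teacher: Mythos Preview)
Your proposal is correct and follows essentially the same route as the paper. The paper's own proof is extremely terse: it handles (i) by the one-line remark that triangle-freeness of $\overline{\Gamma}$ forces the components to be complete, dismisses (ii) as immediate from the definition, and cites Lemma~\ref{Characterization of prime graphs w/ diameter 3} for (iii); part (iv) is not even written out explicitly. Your write-up supplies the missing justification for (iv) via Lucido's bound, which is exactly the intended reasoning.

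One small wording slip: in (i) you write that triangle-freeness ``forces every pair of non-adjacent vertices to lie in the same component,'' which is backwards (vertices in different components are automatically non-adjacent). The argument you actually give, however, is the correct one: a non-adjacent pair \emph{within} a component together with any vertex from another component would form a triangle in $\overline{\Gamma}$, so each component must be complete. Just clean up that introductory clause.
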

\begin{proof}
    $(i)$ follows from $\Gamma$ being triangle-free, so a disconnected graph must have complete components. 
    
    $(ii)$ follows easily from the definition of diameter.
    
    $(iii)$ is from \ref{Characterization of prime graphs w/ diameter 3}
\end{proof}

\section{Hamiltonian Cycles and Paths in MCPGs}
\begin{propositionN}
\label{2-complete graphs have ham path}
Any complete bridge graph contains a Hamiltonian path
\end{propositionN}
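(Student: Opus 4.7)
The plan is to construct an explicit Hamiltonian path by concatenating Hamiltonian paths within each complete piece via the bridge edge. Let $\Gamma = B_{m,n}$ with the two complete subgraphs $K_m$ and $K_n$ joined by a single bridge edge $pq$, where $p \in V(K_m)$ and $q \in V(K_n)$.

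First I would handle the essentially trivial fact that any complete graph $K_r$ admits a Hamiltonian path between any two prescribed distinct endpoints (and for $r=1$ the single vertex is itself a trivial path). In particular, $K_m$ has a Hamiltonian path whose final vertex is $p$; write it as $v_1 \to v_2 \to \cdots \to v_{m-1} \to p$, where $\{v_1,\dots,v_{m-1},p\} = V(K_m)$. Similarly, $K_n$ has a Hamiltonian path starting at $q$, say $q \to w_1 \to w_2 \to \cdots \to w_{n-1}$, where $\{q,w_1,\dots,w_{n-1}\} = V(K_n)$.

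Next I would concatenate these two paths using the bridge edge $pq$. This produces the walk
\[
v_1 \to v_2 \to \cdots \to v_{m-1} \to p \to q \to w_1 \to w_2 \to \cdots \to w_{n-1},
\]
which visits each vertex of $\Gamma$ exactly once and uses only edges of $\Gamma$ (edges within $K_m$, the bridge edge $pq$, and edges within $K_n$). Hence it is a Hamiltonian path.

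Finally I would check the degenerate cases $m=1$ or $n=1$ to make sure nothing breaks: if $n = 1$ then $q = w_{n-1}$ and the right-hand piece is just the single vertex $q$; the path ends at $q$ after crossing the bridge. If $m=1$, similarly the left-hand piece is just $p$, and the path begins there. In each case the concatenation is still a valid Hamiltonian path. There is no real obstacle here; the main point is just that completeness of each side gives complete freedom over the endpoints of an internal Hamiltonian path, so the bridge edge can always be used to glue them together.
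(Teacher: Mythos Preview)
Your proof is correct and follows essentially the same approach as the paper: both construct an explicit Hamiltonian path by traversing all vertices of one complete side ending at its bridge vertex, crossing the bridge edge, and then traversing all vertices of the other side. Your treatment is slightly more careful in that you explicitly address the degenerate cases $m=1$ or $n=1$, but otherwise the arguments coincide.
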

\begin{proof}
It is obvious that a complete bridge graph $B_{m,n}$ is not Hamiltonian. Even though it contains two complete induced subgraphs, the single edge connecting them renders it impossible for there to be a single cycle that includes every vertex. However, we can find a path that connects all vertices. Let $\{u_1, ..., u_m\}$ be the induced complete subgraph on $m$ vertices, and $\{v_1, ... v_n\}$ be the complete induced subgraph on $n$ vertices, where $u_m, v_n$ are the bridge vertices. Then we can construct the following path that reaches each vertex once: $u_1 \rightarrow u_2 \rightarrow ... \rightarrow u_{m-1} \rightarrow u_m \rightarrow v_n \rightarrow v_{n-1} \rightarrow ... \rightarrow v_2 \rightarrow v_1$. This is one such Hamiltonian path. 
\end{proof}
It is well-known that the number of Hamiltonian paths for a graph of the form $B_{n,n}$ (also known as a barbell graph) is $[(n-1)!]^2$ \cite{website:Wolfram-Alpha}. We generalize by considering the number of Hamiltonian paths for an arbitrary complete bridge graph. 

\begin{propositionN}
\label{number of ham paths}
The number of Hamiltonian paths on $B_{m,n}$ is $(m-1)!(n-1)!$.
\end{propositionN}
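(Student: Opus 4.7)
The plan is to decompose any Hamiltonian path on $B_{m,n}$ along the unique bridge edge and count the choices on each side. Let $U = \{u_1, \ldots, u_m\}$ and $V = \{v_1, \ldots, v_n\}$ be the two cliques, with bridge edge $u_m v_n$. Since the bridge is the only edge between $U$ and $V$, and since a Hamiltonian path must visit every vertex of both cliques while forming a connected linear structure, it must use the bridge edge exactly once. In particular, the two endpoints of the path lie in different cliques, because using the bridge a second time is impossible.

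Given this, every Hamiltonian path has the form of an ordering of $U$ ending at $u_m$, followed by the bridge edge $u_m v_n$, followed by an ordering of $V$ starting at $v_n$. Conversely, since each of $U$ and $V$ is a clique, any such concatenation is a valid path in $B_{m,n}$. So the Hamiltonian paths are in bijection with pairs (permutation of $U$ with $u_m$ last, permutation of $V$ with $v_n$ first).

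The first coordinate gives $(m-1)!$ choices (arrange the other $m-1$ vertices of $U$ freely before $u_m$), and the second gives $(n-1)!$ choices, yielding $(m-1)!(n-1)!$ by multiplication. I would then note that this matches the known barbell count $[(n-1)!]^2$ when $m = n$, as a sanity check; this also confirms that the convention is the undirected one, since each undirected path has a well-defined "$U$-side" and "$V$-side" endpoint and is counted exactly once by the construction above.

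There is no real obstacle: the only subtle point is verifying that the path must cross the bridge exactly once (immediate from the bridge being a cut edge and both sides being nonempty) and confirming we are not double-counting under path reversal, which is handled automatically by the asymmetric description (fixing $u_m$ as the last vertex on the $U$-side and $v_n$ as the first vertex on the $V$-side).
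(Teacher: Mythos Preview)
Your proof is correct and follows essentially the same approach as the paper's: both observe that any Hamiltonian path in $B_{m,n}$ must cross the bridge exactly once, decompose it into a Hamiltonian path on each complete subgraph with the bridge vertex as the forced endpoint, and multiply $(m-1)!\cdot(n-1)!$. Your treatment is slightly more careful about the undirected-path convention and the absence of double counting, but the underlying argument is the same.
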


\begin{proof}
Consider the graph $\Gamma = B_{m,n}$. Any Hamiltonian path in $\Gamma$ must contain the full Hamiltonian paths of the complete subgraphs, connected by the bridge edge. The number of Hamiltonian paths for the complete graph $K_n$ is $n!$, so we attain $m!$ and $n!$ Hamiltonian paths from each induced complete subgraph. However, for a Hamiltonian path in $\Gamma$, we can say without loss of generality that the path in one complete subgraph must end at the bridge vertex and the other must start at the other bridge vertex. Thus, the number of valid Hamiltonian paths in the two subgraphs are $(m-1)!$ and $(n-1)!$, so when we consider all the Hamiltonian paths in $\Gamma$, we find that there are $(m-1)!(n-1)!$ in total.
\end{proof}

Now, we are prepared to show that all MPGs are Hamiltonian.

\begin{theoremN} \label{All MPGs are Hamiltonian}
    If $\Gamma \in \mathcal{G}$, then $\Gamma$ is Hamiltonian.
\end{theoremN}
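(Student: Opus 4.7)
The plan is to apply the Chvátal–Erdős theorem, which asserts that any graph $G$ on at least three vertices with $\kappa(G) \geq \alpha(G)$ possesses a Hamiltonian cycle. Since every MPG contains an induced $5$-cycle, $\Gamma$ has at least five vertices, so the size hypothesis is harmless. The work therefore splits into computing $\alpha(\Gamma)$ and bounding $\kappa(\Gamma)$ from below.

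The independence number is immediate. Because $\overline{\Gamma}$ is triangle-free, $\Gamma$ contains no independent triple, so $\alpha(\Gamma) \leq 2$. And because $\chi(\overline{\Gamma}) = 3$ by Lemma 3.2 of \cite{2015_REU_Paper}, $\overline{\Gamma}$ has at least one edge, so $\Gamma$ is not complete and $\alpha(\Gamma) = 2$.

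The substance of the proof lies in showing $\kappa(\Gamma) \geq 2$. I would argue by contradiction: suppose $v$ is a cut vertex of $\Gamma$, and let $C_1, \dots, C_k$ be the components of $\Gamma - v$, with $k \geq 2$. First, each $C_i$ must be a clique in $\Gamma$, for otherwise two non-adjacent vertices of $C_i$ together with any vertex from a different component would form an independent triple, contradicting $\alpha(\Gamma) \leq 2$. Second, we cannot have $k \geq 3$, since then selecting one vertex from each of three components yields an independent triple; hence $k = 2$. Third, $v$ must be adjacent in $\Gamma$ to every vertex of $C_1$ or of $C_2$: if $v$ missed some $c_1 \in C_1$ and some $c_2 \in C_2$, the set $\{v, c_1, c_2\}$ would again be independent. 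Say $v$ is complete to $C_1$. Then $C_1 \cup \{v\}$ is a clique of $\Gamma$, and the partition $V(\Gamma) = (C_1 \cup \{v\}) \sqcup C_2$ into two cliques exhibits a proper $2$-coloring of $\overline{\Gamma}$, contradicting $\chi(\overline{\Gamma}) = 3$.

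With $\kappa(\Gamma) \geq 2 = \alpha(\Gamma)$ in hand, the Chvátal–Erdős theorem immediately delivers a Hamiltonian cycle. The main technical hurdle is the cut-vertex case analysis, but each configuration is forced by the very restrictive inequality $\alpha(\Gamma) \leq 2$, so the work amounts to bookkeeping rather than deep combinatorics. The only external input beyond the earlier lemmas is the Chvátal–Erdős theorem itself.
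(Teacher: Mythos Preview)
Your proof is correct and takes a genuinely different route from the paper. The paper's argument is constructive: it fixes a proper $3$-coloring of $\overline{\Gamma}$, observes that each color class is a clique in $\Gamma$, and then uses the guaranteed induced $5$-cycle (Lemma~4.1 of \cite{2015_REU_Paper}) to pin down specific ``bridge'' vertices between the three cliques, explicitly writing out a Hamiltonian cycle after a short case split on the yellow class. Your argument, by contrast, is non-constructive: you verify the Chv\'atal--Erd\H{o}s hypothesis $\kappa(\Gamma)\ge\alpha(\Gamma)$ directly from the two structural facts $\alpha(\Gamma)\le 2$ (triangle-freeness of $\overline{\Gamma}$) and $\chi(\overline{\Gamma})=3$ (Lemma~3.2 of \cite{2015_REU_Paper}), the latter being exactly what rules out a cut vertex. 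The trade-offs are the usual ones: the paper's proof is self-contained and actually exhibits the cycle, while yours is shorter, avoids the $5$-cycle lemma and all casework, and invokes a standard external theorem. Note too that your cut-vertex analysis is essentially a repackaging of the paper's Theorem~\ref{2-complete graphs are the set difference} (a prime graph with a cut vertex sits inside a complete bridge graph), so the structural content is already present in the paper; you have just recognized that it feeds directly into Chv\'atal--Erd\H{o}s.
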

\begin{proof}
    First recall from Lemma 4.1 of \cite{2015_REU_Paper} that all MPGs contain an induced 5-cycle. By some simple casework on the complement's coloring, we see that we can assume without loss of generality that 2 are green, 2 are blue, 1 is yellow, and they are labeled and connected as in Figure 4.
\begin{figure} \label{All MPGs are Hamiltonian Figure}
    \centering
    \begin{tikzpicture}
        [scale=.7,auto=left]
        \node[circle,fill=green!20] (n1) at (0, 2) {$g_1$};
        \node[circle,fill=green!20] (n2) at (0, 0) {$g_2$};
        \node[circle,fill=blue!20] (n3) at (3, 0) {$b_1$};
        \node[circle,fill=blue!20] (n4) at (3, 2) {$b_2$};
        \node[circle,fill=yellow!20] (n5) at (6, 4.5) {$y_1$};
        \foreach \from/\to in {n1/n2,n2/n3,n3/n4,n4/n5,n5/n1}
    \draw (\from) -- (\to);
    \end{tikzpicture}
    \caption{A 3-coloring of the complement of the induced 5-cycle in a minimal prime graph, superimposed on the 5-cycle}
\end{figure}
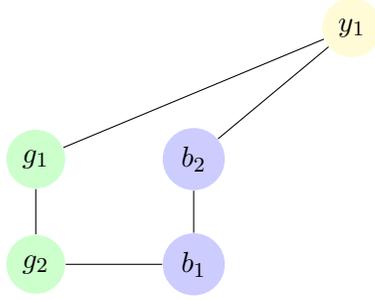
    
    Since green vertices cannot be connected by a edge in the complement, we see that they are fully connected amongst one another in $\Gamma$, and similarly for the blue and yellow vertices themselves. \\
    Let $\mathfrak{G}$ denote the set of green vertices, $\mathfrak{B}$ the blue vertices, and $\mathfrak{Y}$ the yellow vertices. Let the vertices be as labeled in the diagram. Suppose $\Gamma$ contains only that yellow vertex, $y_1$. Then we can create a Hamiltonian cycle as follows - start at $g_1$, go through every green vertex and end at $g_2$, then go to $b_2$, then go through every blue vertex ending on $b_1$, then go to $y_1$, and then back to $g_1$. This can be visualized as below:
    $$g_1 \rightarrow \text{ every other green vertex } \rightarrow g_2 \rightarrow b_1 \rightarrow \text{ every other blue vertex } \rightarrow b_2 \rightarrow y_1 \rightarrow g_1$$ 
    So now suppose there are other yellow vertices. If any yellow vertex $y$ is connected to $g_1$ then we have a Hamiltonian cycle as 
    $$g_1 \rightarrow \text{ every other green vertex } \rightarrow g_2 \rightarrow b_1 \rightarrow \text{ every other blue vertex } \rightarrow b_2 \rightarrow y_1 $$ $$\rightarrow \text{ every other yellow vertex } \rightarrow y \rightarrow g_1$$ 
    So now suppose no other yellow vertex $y$ is connected to $g_1$. By considering the triangles $(y, g_1, b_1)$, $(y, g_1, b_2)$ we see that all other yellow vertices $y$ are connected to $b_1$ and $b_2$. \\
    So then we can take the path
    $$g_1 \rightarrow \text{ every other green vertex } \rightarrow g_2 \rightarrow b_1 \rightarrow \text{ every other blue vertex } \rightarrow b_2 \rightarrow y$$ $$\rightarrow \text{ every other yellow vertex } \rightarrow y_1 \rightarrow g_1$$
    Thus $\Gamma$ is Hamiltonian. 
\end{proof}

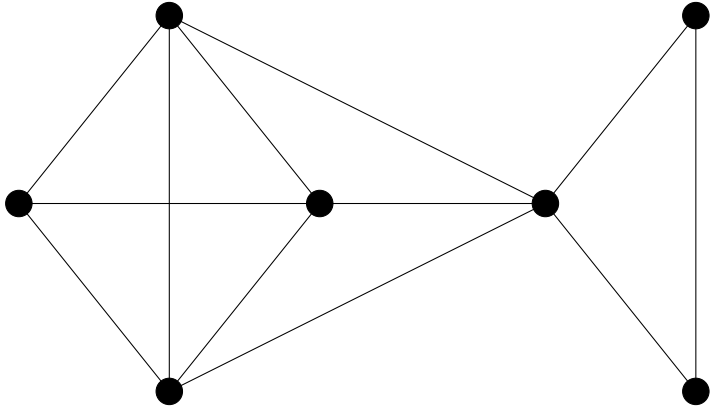
\begin{figure}\label{nonHamiltonian solvable group prime graph}
        \centering
        \begin{tikzpicture}
        
          [scale=.20,auto=left,every node/.style={circle,fill=black!20}]
          \node (n6) at (3,10) {};
          \node (n4) at (5,7.5)  {};
          \node (n5) at (8,7.5)  {};
          \node (n1) at (10,10) {};
          \node (n2) at (10,5)  {};
          \node (n3) at (3,5)  {};
          \node (n7) at (1,7.5) {};
          
        \draw [fill=black] (3,10) circle (5pt);
        \draw [fill=black] (5,7.5) circle (5pt);
        \draw [fill=black] (8,7.5) circle (5pt);
        \draw [fill=black] (10,10) circle (5pt);
        \draw [fill=black] (10,5) circle (5pt);
        \draw [fill=black] (3,5) circle (5pt);
        \draw [fill=black] (1,7.5) circle (5pt);
          \foreach \from/\to in {n6/n4,n4/n5,n5/n1,n1/n2,n2/n5,n6/n3,n3/n4,n7/n6,n7/n4,n7/n3, n6/n5, n3/n5}
            \draw (\from) -- (\to);
        \end{tikzpicture}
        \caption{An example of a solvable group prime graph which contains a complete bridge graph and is not Hamiltonian}
    \end{figure}

Like in Theorem \ref{classifying graphs by their diameter}, where minimality let us classify the prime graphs of solvable groups, we have a similar result for the Hamiltonian property. First note that a graph which is Hamiltonian after edge removals is Hamiltonian prior to the edge removals. So the only non-Hamiltonian prime graphs are those which contain a complete bridge graph (See Figure 5), and all extra edges share the same bridge vertex. If not, then a Hamiltonian cycle can be found. This leads to the following theorem:

\begin{theoremN}
    Let $\Gamma$ be a solvable group prime graph. Then $\Gamma$ is not Hamiltonian if and only if it is disconnected with 2 complete components, or contains a complete bridge graph and any extra edges share the same bridge vertex. 
\end{theoremN}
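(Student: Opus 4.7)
The plan is to combine the edge-removal reduction (Corollary \ref{Edge removal}) with the classification of MCPGs (Theorem \ref{2-complete graphs are the set difference}) and the Hamiltonicity of MPGs (Theorem \ref{All MPGs are Hamiltonian}), together with the monotonicity observation that a Hamiltonian cycle in a spanning subgraph of $\Gamma$ is also a Hamiltonian cycle in $\Gamma$.

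For the reverse direction, the disconnected case is immediate. For the connected case, I would argue that if $\Gamma$ spans a complete bridge graph $B_{m,n}$ with bridge $pq$ and all extra edges share the common bridge vertex $v \in \{p,q\}$, then every edge between the two cliques has $v$ as an endpoint, so $v$ is a cut vertex of $\Gamma$; since Hamiltonian graphs are $2$-connected, $\Gamma$ is not Hamiltonian.

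For the forward direction, suppose $\Gamma$ is not Hamiltonian. If $\Gamma$ is disconnected, the triangle-free complement forces each component to be a clique, and three or more components would give an independent set of size $3$ in $\Gamma$, hence a triangle in $\overline{\Gamma}$; so there are exactly two complete components. If $\Gamma$ is connected, I apply Corollary \ref{Edge removal} to obtain an MCPG $\Lambda$ as a spanning subgraph of $\Gamma$. By Theorem \ref{2-complete graphs are the set difference}, $\Lambda$ is either an MPG or a complete bridge graph. The first case is excluded, since Theorem \ref{All MPGs are Hamiltonian} would make $\Lambda$ Hamiltonian and then force $\Gamma$ to be Hamiltonian, a contradiction. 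Thus $\Lambda = B_{m,n}$, giving $\Gamma$ a vertex partition $V = A \sqcup B$ into cliques with bridge edge $pq$, where $p \in A$ and $q \in B$.

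To finish, I would show that every cross-edge of $\Gamma$ shares a common bridge vertex. Let $H$ denote the bipartite subgraph of $\Gamma$ consisting of all edges between $A$ and $B$; note $pq \in H$. The key claim is that a matching of size $2$ in $H$ yields a Hamiltonian cycle in $\Gamma$: given a matching $\{u_1v_1, u_2v_2\} \subseteq H$ with $u_1 \neq u_2$ in $A$ and $v_1 \neq v_2$ in $B$, one forms the cycle $u_1 \to \cdots \to u_2 \to v_2 \to \cdots \to v_1 \to u_1$, where $u_1 \to \cdots \to u_2$ is any Hamiltonian path through the clique $A$ and $v_2 \to \cdots \to v_1$ is any Hamiltonian path through the clique $B$ (both exist since the cliques are complete). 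Consequently $H$ has maximum matching of size $1$, and K\"onig's theorem for bipartite graphs supplies a vertex cover $\{v\}$ of $H$; since $pq \in H$, we get $v \in \{p,q\}$, as required. The main obstacle is setting up the bipartite matching argument cleanly and handling the small degenerate cases ($m=1$ or $n=1$), where the shared vertex need not be a genuine cut vertex; I expect the theorem's phrasing to tacitly refer to the spanning bridge structures supplied by the MCPG reduction, which avoids these cases automatically.
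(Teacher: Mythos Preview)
Your approach matches the paper's: reduce via edge removal to an MCPG, invoke Theorem \ref{All MPGs are Hamiltonian} to exclude the MPG case, and then argue that a spanning bridge graph with cross-edges not all incident to a single bridge vertex admits a Hamiltonian cycle. The paper presents this only as a one-paragraph sketch preceding the theorem (with no formal proof environment), simply asserting ``If not, then a Hamiltonian cycle can be found''; your K\"onig argument---that a size-$2$ matching in the bipartite cross-edge graph yields a Hamiltonian cycle, hence the maximum matching has size $1$ and a single bridge vertex covers all cross-edges---is a cleaner and more rigorous way to finish that step than anything the paper supplies. The degenerate cases you flag (small $m$ or $n$) are likewise left unaddressed in the paper.
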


\section{Minimally Connected Prime Graphs That Are Self-Complementary}

\begin{lemmaN}
\label{2-complete sc}
If $\Gamma \in \mathcal{C}$ and $\Gamma$ is self-complementary, then $\Gamma = B_{2,2}$.
\end{lemmaN}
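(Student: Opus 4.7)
The plan is to exploit the strong structural constraint that $\overline{\Gamma}$ is $2$-colorable for every $\Gamma \in \mathcal{C}$, which is proved in the course of Lemma \ref{2-complete graphs are MCPG} (the complement of $B_{m,n}$ is the complete bipartite graph $K_{m,n}$ with one edge removed). This immediately says $\overline{\Gamma}$ is bipartite and therefore triangle-free. If $\Gamma$ is self-complementary, then $\Gamma \cong \overline{\Gamma}$, and so $\Gamma$ itself must be bipartite, in particular triangle-free.

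Next, I would use this triangle-free condition to cut down the possible parameters. Since $B_{m,n}$ contains $K_m$ and $K_n$ as induced subgraphs, being triangle-free forces $m \leq 2$ and $n \leq 2$. Combined with the admissibility conditions from Lemma \ref{2-complete graphs are MCPG} (namely $m \geq n$ together with either $m, n > 1$ or $m \in \{1,2\}, n = 1$), only three candidates survive: $B_{1,1}$, $B_{2,1}$, and $B_{2,2}$.

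Finally, I would eliminate the two smaller candidates by direct inspection. The graph $B_{1,1} = K_2$ has complement equal to the edgeless graph on two vertices, which is disconnected and so cannot be isomorphic to $K_2$. The graph $B_{2,1} = P_3$ has three vertices and two edges, but a self-complementary graph on $N$ vertices must have $\binom{N}{2}/2$ edges, which is not even an integer when $N = 3$. The only remaining case is $B_{2,2}$, which is the path $P_4$, a standard example of a self-complementary graph, giving the conclusion.

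There is essentially no hard step: the key idea is the recognition that bipartiteness of $\overline{\Gamma}$ transfers to $\Gamma$ under self-complementarity, after which the problem collapses to a finite check. Without that observation, one would be forced into a more laborious degree-sequence comparison between $B_{m,n}$ and $K_{m,n}$ minus an edge, which works but is less illuminating.
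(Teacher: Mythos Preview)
Your argument is correct and follows essentially the same route as the paper: both exploit that $\overline{B_{m,n}}$ is bipartite (2-colorable) while $B_{m,n}$ contains $K_3$ once $m\geq 3$, forcing $m,n\leq 2$ under self-complementarity. The only cosmetic difference is in dispatching the small cases---the paper invokes the standard $|V|\equiv 0,1\pmod 4$ criterion for self-complementary graphs, whereas you eliminate $B_{1,1}$ and $B_{2,1}$ by direct inspection---but this is immaterial.
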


\begin{proof}
We know that $\mathcal{C}$ consists of the complete bridge graphs, $B_{1,1}$, $B_{2,1}$, and $B_{m,n}$, where $m \geq n>1$. From Observation 1 of \cite{sc-graphs}, we see that if $\Gamma$ is a self-complimentary graph, then $|V(\Gamma)| \equiv 0$ or $1 \pmod{4}$. This immediately rules out $B_{1,1}$ and $B_{2,1}$, so we will continue by only considering the graphs where $m, n >1$. 

If either $m, n \geq 3$ then we see that $B_{m,n}$ has $K_3$ as a subgraph, and thus has chromatic number at least $3$. However, we saw in \cite{Chris_counterexample} that the complement of $B_{m,n}$ is 2-colorable, thus $B_{m, n}$ cannot be self-complementary when $m \geq 3$ or $n \geq 3$. One can easily check by hand that $B_{2, 2}$ is self-complementary. 
\end{proof}

\begin{lemmaN}
\label{MPG sc}
If $\Gamma \in \mathcal{G}$ and $\Gamma$ is self-complementary, then $\Gamma = C_5$ (the 5-cycle).
\end{lemmaN}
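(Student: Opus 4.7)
The plan is to combine a Ramsey-theoretic size bound with the structural facts that an MPG has $\overline{\Gamma}$ triangle-free and contains an induced $5$-cycle. The starting observation is that self-complementarity $\Gamma \cong \overline{\Gamma}$, together with the fact that $\overline{\Gamma}$ is triangle-free (condition (ii) of an MPG), implies that $\Gamma$ itself is also triangle-free. So both $\Gamma$ and $\overline{\Gamma}$ are triangle-free.

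Next I would invoke the classical Ramsey number $R(3,3)=6$: any graph on at least $6$ vertices contains either a triangle or an independent set of size $3$, i.e.\ a triangle in the complement. Since both $\Gamma$ and $\overline{\Gamma}$ are triangle-free, this forces $|V(\Gamma)| \leq 5$. On the other hand, Observation 1 from \cite{sc-graphs} (already used in the proof of Lemma \ref{2-complete sc}) requires $|V(\Gamma)| \equiv 0$ or $1 \pmod{4}$, so $|V(\Gamma)| \in \{1,4,5\}$. Finally, by Lemma 4.1 of \cite{2015_REU_Paper}, every MPG contains an induced $5$-cycle, so $|V(\Gamma)| \geq 5$. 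These three constraints together force $|V(\Gamma)| = 5$.

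It remains to conclude $\Gamma = C_5$. Since $\Gamma$ has exactly $5$ vertices and contains an induced $5$-cycle, that $5$-cycle must span all of $V(\Gamma)$. Any additional edge would be a chord of a $5$-cycle, and a direct check shows that every chord of $C_5$ creates a triangle; since $\Gamma$ is triangle-free no such chord can be present, so $\Gamma = C_5$.

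The only potentially delicate ingredient is the Ramsey step, but $R(3,3)=6$ is elementary, so the argument is genuinely short once the earlier facts (triangle-freeness of the complement, existence of an induced $5$-cycle, the $\bmod\, 4$ vertex-count restriction for self-complementary graphs) are in place.
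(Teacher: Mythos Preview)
Your proof is correct and follows essentially the same approach as the paper: both use self-complementarity plus triangle-freeness of $\overline{\Gamma}$ to get that $\Gamma$ is triangle-free, then invoke $R(3,3)=6$ to force $|V(\Gamma)|\le 5$, and finally identify $\Gamma$ with $C_5$. Your concluding step (induced $5$-cycle plus the observation that any chord creates a triangle) is a slightly more explicit version of the paper's ``by examination the only MPG on $5$ vertices is $C_5$''; the $\bmod\,4$ restriction you add is harmless but unnecessary, since the induced $5$-cycle already gives $|V(\Gamma)|\ge 5$.
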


\begin{proof}
Suppose $\Gamma \in \mathcal{G}$ and $\Gamma$ is self-complementary. We know $\overline{\Gamma}$ is triangle-free, so $\Gamma$ must also be triangle-free in order to be isomorphic to its complement.

Suppose that $|V(\Gamma)| \geq 6$. Then either $\Gamma$ or $\overline{\Gamma}$ has a triangle, corresponding to the simple Ramsey theory fact that $R(3, 3) = 6$. 

By examination we note that the only MPG on $5$ vertices is $C_5$, so thus $\Gamma = C_5$. 
\end{proof}

We use these two lemmas to conclude this section with its main theorem.

\begin{theoremN}
The only two self-complementary graphs in $\hat{\mathcal{G}}$ are $B_{2,2}$ and $C_5$.
\end{theoremN}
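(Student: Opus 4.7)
The plan is to combine the two preceding lemmas via the dichotomy established in Theorem \ref{2-complete graphs are the set difference}. That result tells us $\hat{\mathcal{G}} = \mathcal{G} \cup \mathcal{C}$, i.e.\ every minimally connected prime graph is either a minimal prime graph or one of the small complete bridge graphs in $\mathcal{C}$. So given a self-complementary $\Gamma \in \hat{\mathcal{G}}$, I would split on which of these two sets $\Gamma$ belongs to.

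First I would handle the case $\Gamma \in \mathcal{C}$: by Lemma \ref{2-complete sc}, the only self-complementary graph in $\mathcal{C}$ is $B_{2,2}$. Next I would handle the case $\Gamma \in \mathcal{G}$: by Lemma \ref{MPG sc}, the only self-complementary graph in $\mathcal{G}$ is $C_5$. Combining the two cases, the only candidates are $B_{2,2}$ and $C_5$.

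To finish, I would briefly verify the converse direction, namely that $B_{2,2}$ and $C_5$ are indeed both self-complementary and both lie in $\hat{\mathcal{G}}$. Membership of $B_{2,2}$ in $\hat{\mathcal{G}}$ is Lemma \ref{2-complete graphs are MCPG} (the $m,n>1$ case with $m=n=2$); the self-complementarity of $B_{2,2}$ is noted at the end of the proof of Lemma \ref{2-complete sc}. Membership of $C_5$ in $\hat{\mathcal{G}}$ follows from $C_5 \in \mathcal{G}$ and Lemma \ref{MPGs are MPCGs}, and its self-complementarity is standard.

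There is no real obstacle here: the theorem is an immediate consolidation of Lemmas \ref{2-complete sc} and \ref{MPG sc} once we invoke the structural decomposition $\hat{\mathcal{G}} = \mathcal{G} \cup \mathcal{C}$ from Theorem \ref{2-complete graphs are the set difference}. The only care needed is to make sure the case split exhausts $\hat{\mathcal{G}}$, which Theorem \ref{2-complete graphs are the set difference} guarantees.
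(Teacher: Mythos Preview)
Your proposal is correct and matches the paper's approach exactly: the paper's proof simply states that the result follows immediately from Lemmas~\ref{2-complete sc} and~\ref{MPG sc}, which implicitly uses the decomposition $\hat{\mathcal{G}} = \mathcal{G} \cup \mathcal{C}$ from Theorem~\ref{2-complete graphs are the set difference}. Your version is just a slightly more explicit write-up of the same one-line argument, with the added (but harmless) verification of the converse direction.
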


\begin{proof}
This follows immediately from Lemma \ref{2-complete sc} and Lemma \ref{MPG sc}
\end{proof}

\section{Enumeration of Reseminant Graphs}

We first define a family of minimal prime graphs first described in \cite{2015_REU_Paper}.

\begin{definition}
We say that a graph is \textbf{reseminant} if it can be generated from the 5-cycle via repeated vertex duplication. We let $\mathcal{R}$ denote the set of reseminant graphs.
\end{definition}

These reseminant graphs were originally considered in \cite{2015_REU_Paper}, where it was shown that reseminant graphs are minimal prime graphs. The goal of this section is to generalize and enumerate reseminant graphs.

Note that there exist minimal prime graphs which are not reseminant. One such example is on 8 vertices, illustrated below in Figure 6. To see it is not reseminant note that no vertex is a duplicate of another. To generalize the idea of reseminant graphs, we take the following definition:

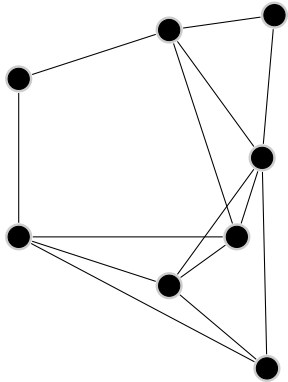
\begin{figure}[b] \label{non-reseminant graph}
    \centering
    \begin{tikzpicture}
    [scale=2, auto=left,every node/.style={circle,fill=black!20}]
    \node (n1) at (0, 1.376) {};
    \node (n2) at (1, 1.701) {};
    \node (n3) at (1.618, .8505) {};
    \node (n4) at (1, 0) {};
    \node (n5) at (0, 0.324919) {};
    \node (n6) at (1.65, -.55) {}; 
    \node (n7) at (1.45, 0.324919) {};
    \node (n8) at (1.7, 1.8) {};
    
    \draw [fill=black] (0,1.376) circle (2pt);
    \draw [fill=black] (1,1.701) circle (2pt);
    \draw [fill=black] (1.618,0.8505) circle (2pt);
    \draw [fill=black] (1,0) circle (2pt);
    \draw [fill=black] (0,0.324919) circle (2pt);
    \draw [fill=black] (1.45,0.324919) circle (2pt);
    \draw [fill=black] (1.65,-0.55) circle (2pt);
    \draw [fill=black] (1.7,1.8) circle (2pt);
    
    \foreach \from/\to in {n1/n2, n2/n3, n3/n4, n4/n5, n5/n1, n4/n6, n5/n6, n3/n6, n2/n7, n3/n7, n4/n7, n5/n7, n2/n8, n3/n8}
        \draw (\from) -- (\to);
\end{tikzpicture}
    \caption{A non-reseminant minimal prime graph, on 8 vertices}
    \label{fig:my_label}
\end{figure}

\begin{definition}
    A graph $\Gamma$ is a \textbf{base graph} if no two vertices are 'identical' - adjacent with the same adjacency relations to the other vertices. Given a vertex, we refer to the set of all vertices it is identical to, along with itself, as its cluster. 
\end{definition}
Another way of understanding base graphs is that its maximum cluster size is $1$. 
\begin{exampleN}
    The 5-cycle is a base graph. Its automorphism group is $D_{10}$. 
\end{exampleN}
\begin{definition}
    Given a base graph $\Gamma$, the set of $\Gamma$-reseminant graphs are those graphs which can be obtained from $\Gamma$ via a finite sequence of vertex duplications. 
\end{definition}
\begin{definition}
    Given a graph $\Gamma$, define its base to be the induced subgraph given by picking one vertex in each of its clusters. 
\end{definition}
\begin{propositionN}\label{base graphs and taking the base are nicely dual}
The following properties are true:
    \begin{enumerate}
        \item Taking the base of a graph is a well-defined operation on graphs
        \item Two isomorphic graphs have isomorphic base graphs
        \item Let $\Gamma$ be a base graph. Then the base of a $\Gamma$-reseminant graph is $\Gamma$.
    \end{enumerate}
\end{propositionN}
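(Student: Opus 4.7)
The plan is to prove each of the three claims by leaning on the basic observation that the ``identical'' relation on vertices (having the same adjacencies to every other vertex, as required in the definition of a base graph) is an equivalence relation whose classes are precisely the clusters. Once this is noted, each item of Proposition \ref{base graphs and taking the base are nicely dual} reduces to a short combinatorial check.

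For (1), I would fix a graph $\Gamma$ and two choices of representatives $R, R' \subset V(\Gamma)$, each containing exactly one vertex from each cluster. The map sending $v \in R$ to the unique element of $R'$ in the same cluster is a bijection. Because any two vertices in the same cluster share identical adjacency relations to every vertex outside that cluster, this bijection preserves and reflects edges between distinct clusters, and within a cluster both induced subgraphs are either both edgeless or both complete (depending on whether the cluster's vertices are adjacent to themselves in the identical-relation convention). In either case $\Gamma[R] \cong \Gamma[R']$, so the base is well-defined up to isomorphism.

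For (2), I would verify that any isomorphism $\phi : \Gamma_1 \to \Gamma_2$ sends clusters bijectively onto clusters. This is immediate, since $u$ and $v$ being identical in $\Gamma_1$ is a condition expressed purely in terms of adjacencies, which $\phi$ preserves. Given a base $R_1$ of $\Gamma_1$, the image $\phi(R_1)$ then contains exactly one representative from each cluster of $\Gamma_2$, so it is a valid base for $\Gamma_2$, and $\phi$ restricts to an isomorphism $\Gamma_1[R_1] \cong \Gamma_2[\phi(R_1)]$. Combined with (1), this gives isomorphic base graphs regardless of how representatives were chosen in $\Gamma_2$.

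For (3), I would induct on the number of vertex duplications used to produce a $\Gamma$-reseminant graph from the base graph $\Gamma$. The base case $k = 0$ is trivial. For the inductive step, suppose $\Gamma''$ is obtained from $\Gamma'$ by duplicating a vertex $v$ to produce a new vertex $v'$; by construction $v$ and $v'$ are identical in $\Gamma''$, so they lie in a single cluster which extends the cluster of $v$ in $\Gamma'$. The key claim is that no other cluster structure changes: if $u, w$ were in different clusters of $\Gamma'$ because some vertex $x$ distinguished them, then $x$ still distinguishes them in $\Gamma''$, and should $x = v$ the duplicate $v'$ distinguishes them in exactly the same way. Hence the cluster partition of $\Gamma''$ is obtained from that of $\Gamma'$ by inserting $v'$ into $v$'s cluster, and a base of $\Gamma''$ can be chosen to coincide with a base of $\Gamma'$, which is $\Gamma$ by induction. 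The main obstacle will be to track carefully that duplication never merges two previously distinct clusters; this boils down to the observation above that the distinguishing vertex, if it was $v$, is effectively preserved by $v'$, so the proof remains routine once that point is made explicit.
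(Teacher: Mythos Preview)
Your proposal is correct and follows essentially the same approach as the paper: well-definedness via identical adjacency relations of cluster-mates, preservation under isomorphism because clusters are an adjacency-only notion, and part (3) via the key observation that a vertex distinguishing two base vertices continues to distinguish their clusters after duplication. The only cosmetic difference is that you package (3) as an induction on the number of duplications while the paper argues directly that distinct base vertices yield distinct clusters in the final reseminant graph; the underlying idea is identical.
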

\begin{proof}
    To see that taking the base is well-defined, note that picking any vertex in a cluster results in the same graph. This holds since the adjacency relations of two vertices in the same cluster are the same, via being identical.
    
    Given two isomorphic graphs, since taking the base graph is an operation only dependent on the adjacency relations of the graph, they must have isomorphic base graphs. 
    
    Let $\Gamma$ be a base graph, and suppose we have a $\Gamma$-reseminant graph $\hat{\Gamma}$. Then for each vertex $v$ in $\Gamma$, $\hat{\Gamma}$ has a cluster for $v$ with size dependent on the number of vertex duplications done on $v$. It remains to prove that two clusters cannot be merged via the operation of vertex duplication. Let $v, v'$ be vertices in $\Gamma$. Then without loss of generality there exists a vertex $x$ that $v$ is connected to but $v'$ is not. Since vertex duplication duplicates adjacency relations, each vertex in the cluster of $v$ will be connected to each vertex in the cluster of $x$ in $\Gamma$, and no vertex in the cluster of $v'$ will be connected to a vertex in the cluster of $x$. This shows that the cluster of $v$ and $v'$ in $\hat{\Gamma}$ are distinct. 
\end{proof}
\begin{lemmaN} \label{graph isomorphisms determined by their base graph}
    Let $\Gamma$ and $\Lambda$ be base graphs.
    Let $\hat{\Gamma}$ be a $\Gamma$-reseminant graph, and identify $\Gamma$ as a subgraph of $\hat{\Gamma}$ via taking a base. Let $\hat{\Lambda}$ be a $\Lambda$-reseminant graph. Given an isomorphism $\phi: \hat{\Gamma} \to \hat{\Lambda}$, the image of $\Gamma$ under $\phi$ is a base of $\Lambda$. 
    
    Conversely given an isomorphism $\psi: \Gamma \to \Lambda$ where $\Lambda$ is a base of $\hat{\Lambda}$, $\psi$ extends to an isomorphism $\hat{\psi}: \hat{\Gamma} \to \hat{\Lambda}$ if for every vertex $v \in \Gamma$, the size of the cluster of $v$ is equal to the size of the cluster of $\psi(v)$ in $\hat{\Lambda}$ 
\end{lemmaN}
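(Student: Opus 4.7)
The plan is to prove the two directions separately, each resting on the fact that isomorphisms respect the cluster decomposition.

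For the forward direction, I would first verify that isomorphisms preserve the ``identical'' relation: two vertices $u, u' \in \hat{\Gamma}$ have the same adjacency to every $w \neq u, u'$ if and only if $\phi(u), \phi(u')$ have the same adjacency to every $w' \neq \phi(u), \phi(u')$ in $\hat{\Lambda}$, by applying $\phi$ and $\phi^{-1}$. Consequently $\phi$ induces a bijection between the clusters of $\hat{\Gamma}$ and those of $\hat{\Lambda}$. Because $\Gamma$, viewed as a base of $\hat{\Gamma}$, contains exactly one vertex from each cluster, its image $\phi(\Gamma)$ contains exactly one vertex from each cluster of $\hat{\Lambda}$, so $\phi(\Gamma)$ is a base of $\hat{\Lambda}$.

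For the converse, given $\psi \colon \Gamma \to \Lambda$ with matching cluster sizes, I would construct the extension cluster by cluster. For each $v \in \Gamma$ let $C_v$ and $C_{\psi(v)}$ denote the respective clusters in $\hat{\Gamma}$ and $\hat{\Lambda}$. Pick any bijection $\beta_v \colon C_v \to C_{\psi(v)}$ with $\beta_v(v) = \psi(v)$, which is possible because $|C_v| = |C_{\psi(v)}|$, and glue these together to define $\hat{\psi}$. To check adjacency preservation I would split into two cases. If $u_1 \in C_v$ and $u_2 \in C_{v'}$ with $v \neq v'$, then using that $u_1$ is identical to $v$ and $u_2$ is identical to $v'$ gives $u_1 u_2 \in E(\hat{\Gamma}) \Leftrightarrow vv' \in E(\hat{\Gamma}) \Leftrightarrow vv' \in E(\Gamma)$, with the analogous chain on the $\hat{\Lambda}$ side, so this case reduces to $\psi$ being an isomorphism. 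If $u_1, u_2$ both lie in $C_v$, then applying the identical-relation to three vertices shows any cluster is a clique or an independent set, and a short argument shows it must be a clique: were $v$ not adjacent to its duplicate, any earlier non-neighbor of $v$ would give a triangle in the complement. Thus $\beta_v$ sends a clique of size $|C_v|$ to a clique of the same size, and adjacency is preserved within the cluster.

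The hard part is this intra-cluster step, which forces one to pin down the precise behavior of vertex duplication (that a duplicate vertex is adjacent to its original) and to confirm this holds uniformly for both $\hat{\Gamma}$ and $\hat{\Lambda}$. Once this is settled, the two directions assemble to give the full correspondence: isomorphisms of reseminant graphs correspond bijectively to pairs consisting of a base-graph isomorphism $\psi \colon \Gamma \to \Lambda$ together with cluster-size-preserving bijections between corresponding clusters.
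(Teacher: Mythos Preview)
Your proof is correct and follows essentially the same route as the paper's: both directions rest on the fact that isomorphisms respect the cluster decomposition, and the extension is built cluster-by-cluster via arbitrary bijections. The paper's argument is terser, simply asserting that adjacency is ``easy to verify'' where you spell out the inter- and intra-cluster cases.

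One remark on what you flag as the ``hard part'': the intra-cluster case is actually immediate from the paper's definition of \emph{identical} vertices, which requires them to be \emph{adjacent} with the same adjacency relations to the other vertices. Hence every cluster is a clique by definition, and vertex duplication (as used in \cite{2015_REU_Paper}) always makes the new vertex adjacent to its original. Your triangle-in-the-complement justification relies on $\overline{\Gamma}$ being triangle-free, which is a prime-graph hypothesis not assumed in this lemma about arbitrary base graphs; fortunately the argument is unnecessary, so no gap results.
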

\begin{proof}
    Consider the image $\phi(\Gamma)$. Suppose $v_1, v_2$ in this image are in the same cluster. Then they are in the same cluster in $\Gamma$, a contradiction. Thus the image is a base graph. 
    
    Now suppose we have an isomorphism $\psi: \Gamma \to \Lambda$ as described, and that for every vertex $v \in \Gamma$, the size of the cluster of $v$ is equal to the cluster of $\psi(v)$ in $\hat{\Lambda}$. We can define the isomorphism $\hat{\psi}: \hat{\Gamma} \to \hat{\Lambda}$ which extends $\psi$ by taking, for each $v \in \hat{\Gamma}$ the cluster of $v$ to the cluster of $\psi(v)$ bijectively. This defines a graph map on the entirety of $\hat{\Gamma}$ since $\Gamma$ is the base of $\hat{\Gamma}$. It is easy to verify the adjacency relations are preserved, thus $\hat{\psi}$ is an isomorphism.  
\end{proof}
From here we have the tools to prove more general statements analogous to those proven about the 5-cycle as a base graph.
\begin{lemmaN} \label{G-reseminant graphs surjectively determined by sequences}
    Let $\Gamma$ be a base graph, with $n$ vertices, which are numbered $1, ..., n$. Then a finite sequence $R_m$ of numbers with range $\{1, ..., n\}$ defines a $\Gamma$-reseminant graph given by performing the duplications according to the sequence, and every $\Gamma$-reseminant graph can be defined in this way. 
\end{lemmaN}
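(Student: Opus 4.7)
The plan is to prove the two claims separately: (a) any finite sequence $R_m = (r_1, \ldots, r_m)$ with entries in $\{1, \ldots, n\}$ determines a $\Gamma$-reseminant graph (up to isomorphism) via the duplication procedure, and (b) every $\Gamma$-reseminant graph arises from some such sequence.

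For (a), I would proceed by induction on $m$. The base case $m = 0$ is $\Gamma$ itself. For the inductive step, assume we have constructed a $\Gamma$-reseminant graph $G_i$ after processing the first $i$ entries. By Proposition \ref{base graphs and taking the base are nicely dual}(3), the base of $G_i$ is $\Gamma$, so there is a well-defined nonempty cluster in $G_i$ associated with the label $r_{i+1} \in V(\Gamma)$. Perform vertex duplication on any vertex in this cluster to obtain $G_{i+1}$. Independence of the choice within the cluster follows because any two vertices in the same cluster are identical by definition, so the transposition swapping them is an automorphism of $G_i$ that extends to an isomorphism between the two possible outcomes of duplication. By construction $G_{i+1}$ is $\Gamma$-reseminant, so induction gives the claim.

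For (b), let $\hat{\Gamma}$ be a $\Gamma$-reseminant graph. Unrolling the definition produces a finite sequence of vertex duplications $D_1, \ldots, D_m$ with intermediate graphs $G_0 = \Gamma, G_1, \ldots, G_m = \hat{\Gamma}$. Applying Proposition \ref{base graphs and taking the base are nicely dual}(3) to each $G_i$ shows that its base is $\Gamma$, so the cluster structure of $G_i$ partitions $V(G_i)$ into classes indexed by the vertices of $\Gamma$. The vertex targeted by $D_{i+1}$ lies in the cluster of a unique label $r_{i+1} \in \{1, \ldots, n\}$; recording these labels yields the sequence $R_m = (r_1, \ldots, r_m)$. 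By the well-definedness established in (a), feeding $R_m$ back into the procedure reproduces $\hat{\Gamma}$ up to isomorphism.

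The only substantive step is the well-definedness argument in (a); all other verifications reduce to tracking cluster structure through vertex duplication, which is already handled by Proposition \ref{base graphs and taking the base are nicely dual}. I do not anticipate further obstacles, since the lemma is a bookkeeping result recording the data of a reseminant construction.
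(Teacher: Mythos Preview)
Your proposal is correct and follows essentially the same approach as the paper. The paper dismisses your part (a) as ``clear'' and handles part (b) by the same cluster-tracking argument you give: identify the base $\Gamma$ inside $\hat{\Gamma}$ via Proposition \ref{base graphs and taking the base are nicely dual}, then observe that duplicating any vertex in a given cluster is equivalent to duplicating the corresponding base vertex, so the sequence of base-vertex labels recovers $\hat{\Gamma}$.
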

\begin{proof}
    The process for going from a sequence $R_m$ to a $\Gamma$-reseminant graph is clear. To see this obtains all $\Gamma$-reseminant graphs, note that given a $\Gamma$-reseminant graph $\hat{\Gamma}$, we can identify its base $\Gamma$ inside the graph of $\Gamma'$ by taking the induced subgraph given by picking one vertex from each cluster, as in \ref{base graphs and taking the base are nicely dual}. Then any vertex $v'$ lives inside the cluster of some vertex $v$ of $\Gamma$ (identified as a subgraph of $\Gamma'$), and performing a vertex duplication on $v'$ is the same as performing a vertex duplication on $v$, since they lie in the same cluster.   
\end{proof}
\begin{lemmaN} \label{order doesn't matter for general reseminant graphs}
    Let $R_m$, $S_m$ be two sequences that define a $\Gamma$-reseminant graph. $R_m$ and $S_m$ define the same reseminant graph if some reordering of the sequence $R_m$ yields $S_m$.
\end{lemmaN}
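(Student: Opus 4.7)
The plan is to reduce the statement to an application of Lemma \ref{graph isomorphisms determined by their base graph} by showing that the cluster sizes in the resulting $\Gamma$-reseminant graph depend only on the multiset of entries in the sequence, not on their order. Throughout, I write $\hat{\Gamma}_R$ for the $\Gamma$-reseminant graph built from a sequence $R_m = (r_1, \ldots, r_m)$, and for each base vertex $v$ I set $c_v(R) = |\{i : r_i = v\}|$.

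First I would establish, by induction on $m$, that the cluster of $v$ in $\hat{\Gamma}_R$ has size $1 + c_v(R)$. The base case $m=0$ is immediate since $\hat{\Gamma}_R = \Gamma$ and $\Gamma$ is a base graph, so every cluster has size one. For the inductive step, I would argue that duplicating any vertex in the cluster of $v$ adds exactly one new vertex to that same cluster, and changes no other cluster. This uses the fact that all vertices in a cluster share the same adjacency relations, so the newly introduced vertex has the same neighborhood as $v$, placing it inside the cluster of $v$. It also uses that duplication at a vertex $v'$ in the cluster of $v$ produces a graph isomorphic (indeed, equal up to relabeling) to duplication at $v$ itself, so the choice of representative does not affect the outcome up to isomorphism.

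Next, given that $S_m$ is a reordering of $R_m$, the multiset of entries is unchanged, hence $c_v(R) = c_v(S)$ for every base vertex $v$. By Proposition \ref{base graphs and taking the base are nicely dual}(3), both $\hat{\Gamma}_R$ and $\hat{\Gamma}_S$ have $\Gamma$ as their base. Taking the identity map $\mathrm{id}_\Gamma : \Gamma \to \Gamma$ as the isomorphism $\psi$ in Lemma \ref{graph isomorphisms determined by their base graph}, the matching cluster sizes guarantee that $\psi$ extends to an isomorphism $\hat{\psi} : \hat{\Gamma}_R \to \hat{\Gamma}_S$, which is exactly the assertion that $R_m$ and $S_m$ define the same $\Gamma$-reseminant graph.

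The main obstacle is the first step: justifying carefully that each duplication increases the relevant cluster size by exactly one and leaves all others untouched, and that picking different representatives within a cluster for duplication yields isomorphic graphs. This requires a bit of bookkeeping at the level of neighborhoods but is essentially built into the definitions of vertex duplication and cluster. Once this invariant is in place, the matching of cluster-size data across $R_m$ and $S_m$ is immediate from the definition of reordering, and Lemma \ref{graph isomorphisms determined by their base graph} delivers the isomorphism with almost no additional work.
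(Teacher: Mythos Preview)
Your proposal is correct and follows essentially the same approach as the paper: both take the identity isomorphism on $\Gamma$ and extend it via Lemma \ref{graph isomorphisms determined by their base graph}, using that the cluster sizes match. The paper's proof simply asserts that the cluster sizes are the same, whereas you supply the inductive justification for this fact, so your version is a more detailed rendering of the same argument.
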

\begin{proof}
    Take the identity isormophism on $\Gamma$. This extends to an isomorphism via lemma \ref{graph isomorphisms determined by their base graph} since their vertex cluster sizes are the same. 
\end{proof}
\begin{theoremN} \label{G-reseminant graphs enumerated by orbits of action of Aut(G) on n-tuples}
    Consider the automorphism graph of $\Gamma$, $Aut(\Gamma)$ as a subgroup of $S_n$, based on its action on the labeled vertices. Let $Aut(\Gamma)$ act on the set of n-tuples via the induced action from $S_n$. Then two n-tuples define isomorphic $\Gamma$-reseminant graphs if and only if they lie in the same orbit.
\end{theoremN}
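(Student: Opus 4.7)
The plan is to interpret the $n$-tuple $(a_1, \ldots, a_n)$ attached to a $\Gamma$-reseminant graph as the record of cluster sizes at each labeled vertex of $\Gamma$; equivalently, $a_i - 1$ is the number of duplications performed at vertex $i$ (or $a_i$ is the number of duplications if we regard the original vertex as already contributing $1$). By Lemma \ref{G-reseminant graphs surjectively determined by sequences} every $\Gamma$-reseminant graph comes from some sequence of duplications, and by Lemma \ref{order doesn't matter for general reseminant graphs} reordering the sequence does not change the resulting graph, so the graph depends only on this $n$-tuple of multiplicities. I will prove both directions of the biconditional using Lemma \ref{graph isomorphisms determined by their base graph} as the main tool.

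For the easy direction, suppose $(a_1, \ldots, a_n)$ and $(b_1, \ldots, b_n)$ lie in the same $\Aut{\Gamma}$-orbit, so there exists $\sigma \in \Aut{\Gamma}$ with $b_i = a_{\sigma^{-1}(i)}$ for all $i$. Let $\hat{\Gamma}_a, \hat{\Gamma}_b$ be the reseminant graphs determined by these tuples, and identify $\Gamma$ as a base of each via Proposition \ref{base graphs and taking the base are nicely dual}. Then $\sigma: \Gamma \to \Gamma$ is an isomorphism between the bases and, by construction, the size of the cluster of $v$ in $\hat{\Gamma}_a$ equals the size of the cluster of $\sigma(v)$ in $\hat{\Gamma}_b$. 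Lemma \ref{graph isomorphisms determined by their base graph} then extends $\sigma$ to an isomorphism $\hat{\Gamma}_a \cong \hat{\Gamma}_b$.

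For the converse, suppose $\hat{\Gamma}_a \cong \hat{\Gamma}_b$ via an isomorphism $\phi$. Fix identifications of $\Gamma$ as a base in each, using Proposition \ref{base graphs and taking the base are nicely dual}. By Lemma \ref{graph isomorphisms determined by their base graph}, the image $\phi(\Gamma) \subseteq \hat{\Gamma}_b$ is a base of $\hat{\Gamma}_b$. Since any two bases of a given $\Gamma$-reseminant graph are canonically isomorphic via matching their clusters (each base contains exactly one representative from each cluster, and cluster-preserving bijections are graph isomorphisms), we may compose $\phi$ on its target side with a base-identifying isomorphism of $\hat{\Gamma}_b$ to itself, after which $\phi$ restricts to an isomorphism of our chosen base $\Gamma$ with itself, i.e., an element $\sigma \in \Aut{\Gamma}$. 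Because $\phi$ carries the entire cluster of each vertex $v \in \Gamma$ bijectively onto the cluster of $\sigma(v)$ in $\hat{\Gamma}_b$, the cluster sizes satisfy $a_v = b_{\sigma(v)}$, which is exactly the statement that $\sigma \cdot (a_1, \ldots, a_n) = (b_1, \ldots, b_n)$.

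The main obstacle I anticipate is the converse direction, specifically handling the fact that the base of a reseminant graph is only well-defined \emph{up to} choice of cluster representative; a general isomorphism $\phi$ need not send our chosen base to our chosen base on the nose. I would handle this by explicitly composing with the cluster-permuting automorphism that realigns the two bases, using the fact that swapping two vertices within a single cluster is always an automorphism of the ambient reseminant graph (this follows directly from the definition of "identical" vertices). Once this alignment is in place, the restriction of $\phi$ to the base is an honest element of $\Aut{\Gamma}$ and the cluster-size bookkeeping gives the desired orbit equality.
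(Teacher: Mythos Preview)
Your proposal is correct and follows essentially the same approach as the paper: both directions are handled via Lemma \ref{graph isomorphisms determined by their base graph}, extending or restricting an isomorphism of bases and tracking cluster sizes. You are in fact more careful than the paper on the converse direction, explicitly realigning bases via a cluster-permuting automorphism where the paper simply asserts that $\phi(\Gamma)$ ``is necessarily also $\Gamma$.''
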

\begin{proof}
    First suppose that two n-tuples $(a_1, ..., a_n)$ and $(b_1, ..., b_n)$ lie in the same orbit, ie there exists $\sigma \in Aut(\Gamma)$ such that $\sigma(a_1, ..., a_n) = (b_1, ..., b_n)$. Then $\sigma$ is an isomorphism on the base graph of each $\Gamma$-reseminant graph, and it extends to an isomorphism via lemma \ref{graph isomorphisms determined by their base graph} since lying in the same orbit tells us $\sigma$ satisfies the proper cluster size condition. 
    
    Now suppose that two n-tuples define isormophic $\Gamma$-reseminant graphs. Given an isomorphism $\phi$, which is an isomorphism when restricted to $\Gamma$, call it $\varphi$. The image $\phi(\Gamma)$ is necessarily also $\Gamma$. In addition, it holds that for every vertex $v$ in $\Gamma$, its cluster size is equal to the cluster size of $\phi(v)$. Thus taking $\varphi$ shows that the two $n$-tuples lie in the same orbit. 
\end{proof}
\begin{corollaryN} \label{enumearting G-reseminant graphs on n+k vertices}
    Let $Aut(\Gamma)$ act on the set of non-negative n-tuples with sum $k$. Then the number of $\Gamma$-reseminant graphs on $n+k$ vertices is equal to the number of orbits of that action. 
\end{corollaryN}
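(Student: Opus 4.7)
The plan is to invoke Theorem \ref{G-reseminant graphs enumerated by orbits of action of Aut(G) on n-tuples} essentially as a black box, and check that restricting the $n$-tuples to those with a prescribed sum corresponds precisely to fixing the number of vertices of the resulting $\Gamma$-reseminant graph.

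First I will identify the bijection between non-negative $n$-tuples and $\Gamma$-reseminant graphs. Given $(a_1,\dots,a_n)$, the associated graph is the one obtained by duplicating the $i$-th vertex of $\Gamma$ exactly $a_i$ times; Lemma \ref{G-reseminant graphs surjectively determined by sequences} says every $\Gamma$-reseminant graph arises from some duplication sequence, and Lemma \ref{order doesn't matter for general reseminant graphs} says the isomorphism type depends only on the unordered multiset of duplications, i.e., only on the tuple $(a_1,\dots,a_n)$. Under this correspondence, the cluster of the $i$-th base vertex has size $1+a_i$, so the total number of vertices is $n+\sum_{i=1}^{n} a_i$.

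Next I will note that the $\text{Aut}(\Gamma)$-action on $n$-tuples (by permuting coordinates) preserves the coordinate sum, so it restricts to an action on the subset of non-negative $n$-tuples with $\sum_{i=1}^n a_i=k$. This subset is in bijection with $\Gamma$-reseminant graphs having exactly $n+k$ vertices, by the previous paragraph. Applying Theorem \ref{G-reseminant graphs enumerated by orbits of action of Aut(G) on n-tuples} to this restricted action, two such tuples determine isomorphic $\Gamma$-reseminant graphs if and only if they lie in the same orbit, and so the number of isomorphism classes of $\Gamma$-reseminant graphs on $n+k$ vertices equals the number of orbits of this restricted action.

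There is not really a hard step here; the only point requiring care is that the action of $\text{Aut}(\Gamma)\subseteq S_n$ genuinely preserves the fibers of the sum map, which is immediate because $S_n$ permutes coordinates without altering their sum. Consequently the statement follows directly from Theorem \ref{G-reseminant graphs enumerated by orbits of action of Aut(G) on n-tuples} restricted to the invariant subset $\{(a_1,\dots,a_n)\in\mathbb{Z}_{\geq 0}^n : \sum a_i=k\}$.
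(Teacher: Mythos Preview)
Your argument is correct and follows the same approach as the paper: you observe that the $\mathrm{Aut}(\Gamma)$-action preserves the coordinate sum, so Theorem \ref{G-reseminant graphs enumerated by orbits of action of Aut(G) on n-tuples} restricts to the subset of tuples with sum $k$, which corresponds exactly to $\Gamma$-reseminant graphs on $n+k$ vertices. The paper's proof is a one-line version of precisely this reasoning.
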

\begin{proof}
    This follows easily from the fact that the elements of $Aut(\Gamma)$ do not change the sum of the n-tuple, so the set of non-negative n-tuples can be restricted to those yielding graphs with $n+k$ vertices.
\end{proof}
    For a particular $\Gamma$, we can compute the exact number via a standard orbit counting argument (Cauchy-Frobenius formula, also often (falsely) referred to as
    Burnside's Lemma). We do this for the 5-cycle to count all reseminant graphs. 
\begin{corollaryN}\label{enumerating reseminant graphs}
    The number of reseminant graphs on $n+5$ vertices is given by
    \begin{itemize}
        \item $\cfrac{\binom{n+4}{n}+4+\frac{5n^2+30n+40}{8}}{10}$ if $2 \mid n, 5 \mid n$
        \item $\cfrac{\binom{n+4}{n}+\frac{5n^2+30n+40}{8}}{10}$ if $2\mid n, 5 \nmid n$ 
        \item $\cfrac{\binom{n+4}{n}+4+\frac{5n^2+20n+15}{8}}{10}$ if $2 \nmid n, 5 \mid n$
        \item $\cfrac{\binom{n+4}{n}+\frac{5n^2+20n+15}{8}}{10}$ if $2 \nmid n, 5 \nmid n$ 
    \end{itemize}
\end{corollaryN}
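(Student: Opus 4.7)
The plan is to apply Corollary \ref{enumearting G-reseminant graphs on n+k vertices} with $\Gamma = C_5$, whose automorphism group is $D_{10}$ (as noted in the preceding example). In this way, reseminant graphs on $n+5$ vertices correspond to orbits of the natural $D_{10}$-action on the set of non-negative 5-tuples $(a_1, a_2, a_3, a_4, a_5)$ with $a_1 + a_2 + a_3 + a_4 + a_5 = n$. I would then invoke the Cauchy-Frobenius lemma to reduce the problem to computing $|\mathrm{Fix}(g)|$ for each of the 10 elements of $D_{10}$ and averaging.

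For the per-element computation, I would split $D_{10}$ into three types. The identity fixes every tuple, giving $\binom{n+4}{n}$ by stars and bars. Each of the four non-trivial rotations acts as a single 5-cycle on the coordinate positions, so a fixed tuple must have all five coordinates equal; this forces $5 \mid n$, yielding exactly one fixed tuple in that case and zero otherwise. Each of the five reflections in the dihedral group on an odd cycle fixes one vertex and swaps the remaining two pairs, so fixed tuples are parametrized by solutions of $a + 2b + 2c = n$ with $a, b, c \geq 0$; grouping by $s = b+c$ I would obtain
\[
\sum_{s=0}^{\lfloor n/2 \rfloor}(s+1) \;=\; \binom{\lfloor n/2 \rfloor + 2}{2}
\]
fixed tuples per reflection.

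The last step is to simplify the reflection count according to the parity of $n$: the binomial above equals $(n^2+6n+8)/8$ when $n$ is even and $(n^2+4n+3)/8$ when $n$ is odd. Multiplying by the 5 reflections and combining with a contribution of $4$ or $0$ from the non-trivial rotations (according to whether $5 \mid n$), then dividing by $|D_{10}| = 10$, yields the four stated cases directly. The main potential obstacle is purely bookkeeping: verifying that each reflection in $D_{10}$ acts as a single fixed point plus two disjoint transpositions on the coordinates, and then keeping straight the two parity cases of the binomial coefficient when matching numerators to the stated formulas. Everything else is a routine application of the orbit counting formula.
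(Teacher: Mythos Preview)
Your proposal is correct and follows essentially the same route as the paper: applying Corollary~\ref{enumearting G-reseminant graphs on n+k vertices} with $\Gamma=C_5$, then Cauchy--Frobenius over $D_{10}$, with the same three-way split into identity, rotations, and reflections. The only cosmetic difference is that you parametrize the reflection fixed points by $s=b+c$ to get $\binom{\lfloor n/2\rfloor+2}{2}$, whereas the paper sums over the fixed-coordinate value; both give the identical quadratic expressions in each parity case.
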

\begin{proof}
    Let $X$ denote the set of non-negative 5-tuples $(a, b, c, d, e)$ such that their sum is $n$. Then $D_{10}$ acts on $X$ via considering $D_{10}$ as a subset of $S_5$. We wish to count the orbits of this action.
    
    We use the Cauchy-Frobenius orbit counting formula which states that the number of orbits is the the average number of fixed points, ie that
    \begin{equation} 
        |X / D_{10}| = \frac{1}{|D_{10}|} \sum_{\sigma \in D_{10}} |X^{\sigma}|
    \end{equation}
    Note that $X$ has size $\binom{n+5-1}{n}$, computed using a standard combinatorial counting argument (:balls \& boxes"). 
    Consider the elements of $D_{10}$. We have 
    \begin{itemize}
        \item 1 identity element which fixes all elements of $X$
        \item 4 different rotations which fix an element if $n \equiv 0 \text{ mod }5$
        \item 5 reflections, which fix $\frac{n^2+6n+8}{8}$ elements if $n$ is even, $\frac{n^2+4n+3}{8}$ elements if $n$ is odd
    \end{itemize}
    To see this, note that the non-trivial rotations fix an element if and only if all 5 of its indices are equal. 
    
    For a given reflection, consider the value of the fixed index to be $i$. Then $n-i$ must be even, and we get a fixed point for each pair $x, y$ such that $x+y = \frac{n-i}{2}$. Here, $x,y$ represent the values of the other 2 indices on one side of the reflection, and together with $i$ determine the 5-tuple, as illustrated in the figure. The number of such pairs $x, y$ is simply $\frac{n-i}{2} + 1$, so together with some casework on the parity 
    \begin{align*}
        & n \text{ even } & \sum_{2|i, 0 \leq i \leq n} \frac{n-i}{2} + 1 = \frac{n^2+6n+8}{8} \\
        & n \text{ odd } & \frac{(n-1)^2 + 6(n-1) + 8}{8} = \frac{n^2+4n+3}{8}
    \end{align*}
    this yields the above expressions. Then plugging into (1) yields the corollary. 
\end{proof}
We mention here that Corollary \ref{enumearting G-reseminant graphs on n+k vertices} was also obtained in Summer 2018 during a six week research project by a research team of (at the time) three high school students - Joshua Kolenbrander, Elijah Stroud, Selina Wu - under the guidance of the fourth author as part of the Honors Summer Math Camp at Texas State University. The team came up with the alternative (but equivalent) formula
 \begin{center}
            $\left\lfloor \frac{n^4}{240}+\frac{n^3}{24}+\frac{5 n^2}{24}+\frac{n}{16} (-1)^n+\frac{25 n}{48}\right\rfloor +1$
        \end{center}
for the number of reseminant graphs on $n$ vertices. Their proof rested on the observation that the number of reseminant graphs is the number of bracelets on $n$ beads where exactly 5 beads (corresponding to the original 5-cycle) are black. It was also during this project that the
term "reseminant" was coined.


\section{Asymptotic Enumeration}
Though we enumerated the family of reseminant graphs in Section 7, enumerating minimal prime graphs in general is more difficult. We find an enumerative connection of maximal triangle-free, 3-colorable graphs, and utilize a folklore construction to attain a asymptotic lower bound. To do this, we consider a \textit{third} notion of minimality, which removed the condition of being connected.

\begin{definition}
A graph $\Gamma$ is a possibly disconnected minimal prime graph (PDMPG) if 
    \begin{enumerate}
        \item $\overline{\Gamma}$ is triangle-free
        \item $\overline{\Gamma}$ is 3-colorable
        \item removing any edge from $\Gamma$ results in a violation of (i) or (ii)
    \end{enumerate}
    We denote the class of such graphs as $\tilde{\mathcal{G}}$. 
\end{definition}

Essentially we are loosening the definition of minimal prime graphs to allow disconnected graphs. We immediately can characterize those graphs which lie in the set-difference, in other words the disconnected minimal prime graphs, as follows. 

\begin{lemmaN} \label{non-MPG PDMPG are disconnected with 2 complete components}
    Let $\Gamma$ be such that it is a possibly disconnected minimal prime graph but not a minimal prime graph. Then $\Gamma$ is disconnected with 2 completely connected components. 
\end{lemmaN}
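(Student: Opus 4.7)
The plan is to chain together two simple structural observations after first showing that $\Gamma$ must be disconnected.

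First, I would note that the PDMPG axioms differ from the MPG axioms only in dropping the connectedness requirement: the triangle-free and $3$-colorable conditions on $\overline{\Gamma}$ are identical, and the PDMPG minimality clause (removing any edge violates (i) or (ii)) is precisely the MPG minimality clause (removing any edge violates (ii) or (iii) in their numbering). Consequently, a connected PDMPG automatically satisfies all four MPG axioms, hence is an MPG. Taking the contrapositive, any $\Gamma \in \tilde{\mathcal{G}} \setminus \mathcal{G}$ must be disconnected.

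Next I would show each connected component of $\Gamma$ is complete. Suppose for contradiction that some component $C$ contains two vertices $u, v$ with $uv \notin E(\Gamma)$. Since $\Gamma$ is disconnected, choose any vertex $w$ lying in a component distinct from $C$. Then $uw$ and $vw$ are non-edges of $\Gamma$ as well (they cross components), so $\{u,v,w\}$ forms an independent set in $\Gamma$, i.e.\ a triangle in $\overline{\Gamma}$. This contradicts PDMPG condition (i), so every component must be a complete graph.

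Finally I would show there are exactly two components. There are at least two by the first step. If there were three or more components $C_1, C_2, C_3, \dots$, pick $v_i \in C_i$ for $i = 1, 2, 3$. Since the $v_i$ lie in pairwise distinct components, none of the edges $v_iv_j$ are in $\Gamma$, so $\{v_1,v_2,v_3\}$ is again an independent set in $\Gamma$, producing a triangle in $\overline{\Gamma}$ and contradicting PDMPG condition (i). Thus $\Gamma$ has exactly two components, each complete, which is the desired conclusion.

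I do not anticipate a real obstacle here: the argument is essentially a pigeonhole on the triangle-freeness of $\overline{\Gamma}$. The only subtlety worth flagging explicitly is the first step, namely that the PDMPG minimality condition is genuinely the \emph{same} as the MPG minimality condition (just relative to the same two graph properties of $\overline{\Gamma}$), so the only way to be a PDMPG without being an MPG is to fail connectedness. Everything else is forced by triangle-freeness of the complement.
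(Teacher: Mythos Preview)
Your argument is correct and matches the paper's proof essentially step for step: first observe that a PDMPG failing to be an MPG must be disconnected, then use triangle-freeness of $\overline{\Gamma}$ to rule out more than two components and to force each component to be complete. The paper is terser and reverses the order of the last two steps, but the content is identical.
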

\begin{proof}
    Let $\Gamma$ be such a graph. Then it is necessarily disconnected. If it has more than 2 disconnected components one can form a triangle in the complement. 
    
    Now consider these two disconnected components. It necessarily holds that each is complete, otherwise one can utilize the missing edge and a vertex from the other component to form a triangle in the complement. 
\end{proof}

Thus we can get a counting relation between the two types of graphs. Let $\mathcal{G}_k$ denote the set of minimal prime graphs on $k$ vertices, and $\tilde{\mathcal{G}}_k$ be defined similarly. 

\begin{corollaryN} \label{Counting the difference between MPG and PDMPG}
    $|\tilde{\mathcal{G}}_k \setminus \mathcal{G}_k| = \lfloor \frac{k-1}{2}\rfloor.$ 
\end{corollaryN}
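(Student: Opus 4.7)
The plan is to apply Lemma \ref{non-MPG PDMPG are disconnected with 2 complete components} as a structural reduction and then perform a direct combinatorial count. By that lemma, every $\Gamma \in \tilde{\mathcal{G}}_k \setminus \mathcal{G}_k$ is isomorphic to $K_a \sqcup K_b$ for some integers $a \geq b \geq 1$ with $a + b = k$. The remainder of the argument splits into (a) verifying the converse inclusion—that each such disjoint union is actually in $\tilde{\mathcal{G}}_k \setminus \mathcal{G}_k$—and (b) enumerating the resulting isomorphism classes.

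For (a), I would observe that the complement of $K_a \sqcup K_b$ is the complete bipartite graph $K_{a,b}$, which is bipartite and therefore both triangle-free and $2$-colorable, yielding conditions (i) and (ii). For condition (iii), any edge of $\Gamma$ lies inside one of the two cliques, say $\{u,v\} \subseteq V(K_a)$; removing it places $\{u,v\}$ into the complement, and combined with any vertex $w \in V(K_b)$ (which in the complement is joined to both $u$ and $v$) this produces a triangle, violating (i). Disconnectedness immediately rules out membership in $\mathcal{G}_k$, so every such $K_a \sqcup K_b$ lies in $\tilde{\mathcal{G}}_k \setminus \mathcal{G}_k$ whenever the edge-removal argument is non-vacuous.

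For (b), two graphs $K_a \sqcup K_b$ and $K_{a'} \sqcup K_{b'}$ are isomorphic if and only if the multisets $\{a,b\}$ and $\{a',b'\}$ agree, so the count equals the number of unordered pairs of positive integers summing to $k$. A routine enumeration via the map $b \mapsto k - b$ for $b \in \{1, \ldots, \lfloor k/2 \rfloor\}$ produces this count, and the stated value $\lfloor (k-1)/2 \rfloor$ follows once one pins down which of the extreme cases is to be included. The main obstacle is precisely this boundary bookkeeping: the graph $K_1 \sqcup K_1$ (occurring only when $k = 2$) has no edges at all, so condition (iii) is satisfied only vacuously, and analogous subtleties arise whenever one of the cliques is trivial. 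Aligning the treatment of these degenerate cases with the convention implicit in the stated formula is the only non-mechanical step; once fixed, the counting is immediate.
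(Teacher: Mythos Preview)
Your approach is the same as the paper's: invoke Lemma~\ref{non-MPG PDMPG are disconnected with 2 complete components} to reduce to graphs of the form $K_a \sqcup K_b$ with $a+b=k$, and then count unordered positive pairs. Your converse verification in part~(a) is in fact more explicit than anything the paper writes down.

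The one place your outline goes astray is the boundary bookkeeping you flag at the end. You speculate that the discrepancy between $\lfloor k/2\rfloor$ (the raw pair count) and the stated $\lfloor (k-1)/2\rfloor$ is explained by excluding pairs with a trivial clique, i.e.\ $b=1$. That does not work: for odd $k$ (say $k=5$) it leaves only $(3,2)$, giving $1$, whereas the formula gives $2$. The paper instead reaches $\lfloor (k-1)/2\rfloor$ by counting pairs with $a\ne b$ via parity casework---it is the diagonal pair $a=b=k/2$ (for even $k$) that is dropped, not the $b=1$ case. Note, incidentally, that your own argument in~(a) shows $K_{k/2}\sqcup K_{k/2}$ \emph{is} a PDMPG whenever $k\ge 4$, so had you pushed your verification through you would have seen that the case you need to reconcile is $a=b$, not the trivial-clique case.
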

\begin{proof}
    By Lemma \ref{non-MPG PDMPG are disconnected with 2 complete components}, $|\tilde{\mathcal{G}}_k \setminus \mathcal{G}_k|$ is equal to the number of graphs with 2 disconnected complete components. The corollary follows from casework on the parity of $k$ and counting pairs of positive numbers $(a, b)$ such that $a + b = k$ and $a \ne b$.  
\end{proof}

\begin{corollaryN} \label{Counting the difference between MPG and MCPG}
    Let $\hat{\mathcal{G}}_k$ denote the number of minimally connected prime graphs on $k$ vertices. Then if $k > 3$, $|\hat{\mathcal{G}}_k \setminus \mathcal{G}_k| = \lfloor \frac{k-1}{2}\rfloor - 1.$ When $k \leq 3$, $|\hat{\mathcal{G}}_k \setminus \mathcal{G}_k| = \lfloor \frac{k-1}{2}\rfloor.$
\end{corollaryN}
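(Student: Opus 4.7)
The plan is to reduce the count to Corollary \ref{Counting the difference between MPG and PDMPG} via a near-bijection between the PDMPG side and the MCPG side of the set-difference. By Theorem \ref{2-complete graphs are the set difference}, every graph in $\hat{\mathcal{G}}_k \setminus \mathcal{G}_k$ is a complete bridge graph $B_{m,n}$ with $m+n = k$ that satisfies the condition of Lemma \ref{2-complete graphs are MCPG}, namely $m \geq n > 1$ or $(m,n) \in \{(1,1),(2,1)\}$. On the other hand, Lemma \ref{non-MPG PDMPG are disconnected with 2 complete components} tells us that every graph in $\tilde{\mathcal{G}}_k \setminus \mathcal{G}_k$ has the form $K_a + K_b$ with $a + b = k$ and $a \geq b \geq 1$.

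Next, I would set up the natural map that sends $K_a + K_b$ to $B_{a,b}$ by attaching a bridge edge between a chosen vertex of each component. This map is well-defined up to isomorphism, since all vertices of a $K_a$ are interchangeable, and it is invertible, since the bridge edge of $B_{a,b}$ is the unique edge whose removal separates it into two complete components of the prescribed sizes. Comparing the parameter sets, every $K_a + K_b$ is a PDMPG, but $B_{a,b}$ is an MCPG precisely in the cases listed above. Hence the only unordered pair $\{a, b\}$ with $a + b = k$ for which $K_a + K_b$ is a PDMPG but $B_{a,b}$ is \emph{not} an MCPG is $\{k-1, 1\}$, and this exclusion is nontrivial only when $k - 1 \geq 3$, i.e., $k \geq 4$.

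Combining these observations, for $k > 3$ one obtains $|\hat{\mathcal{G}}_k \setminus \mathcal{G}_k| = |\tilde{\mathcal{G}}_k \setminus \mathcal{G}_k| - 1$, while for $k \leq 3$ one obtains $|\hat{\mathcal{G}}_k \setminus \mathcal{G}_k| = |\tilde{\mathcal{G}}_k \setminus \mathcal{G}_k|$. Substituting the value $\lfloor (k-1)/2 \rfloor$ from Corollary \ref{Counting the difference between MPG and PDMPG} then yields the two cases stated in the corollary. I do not anticipate any real obstacle; the only care needed is in confirming that the bridge-edge map is a genuine bijection and in checking by direct inspection that for $k \in \{2,3\}$ no exclusion is required (since $B_{1,1}$ and $B_{2,1}$ remain MCPGs by the small-case clause of Lemma \ref{2-complete graphs are MCPG}).
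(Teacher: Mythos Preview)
Your proposal is correct and follows essentially the same route as the paper: both arguments set up the bridge-edge bijection between $\tilde{\mathcal{G}}_k \setminus \mathcal{G}_k$ and $\hat{\mathcal{G}}_k \setminus \mathcal{G}_k$, identify the single excluded pair $\{k-1,1\}$ for $k>3$, and then substitute the value from Corollary~\ref{Counting the difference between MPG and PDMPG}. Your handling of the small cases $k\le 3$ via the explicit clause of Lemma~\ref{2-complete graphs are MCPG} is in fact slightly cleaner than the paper's, which simply defers them to a ``by hand'' check.
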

\begin{proof}
    Recall that by Theorem \ref{2-complete graphs are the set difference}, the set difference are the complete bridge graphs of the form $B_{m,n}$ for $m, n$ where $m \geq n > 1$ or $m \in \{1, 2\}, n = 1$. When $k \geq 3$ we can assume that $m \geq n > 1$. 
    
    Given a complete bridge graph $B_{m,n}$ with $m \geq n >1$ we can remove the bridge edge to obtain a disconnected graph with 2 complete connected components, with at least 2 vertices each. Conversely, given such a disconnected graph, we can add a bridge edge between the two components to obtain a complete bridge graph. This yields a set bijection to disconnected graphs with 2 complete components with at least 2 vertices each. Since there is only one exception in  $|\tilde{\mathcal{G}}_k \setminus \mathcal{G}_k|$, the graph with components of vertex size $1, k$, the corollary follows. The case when $k \leq 3$ can be easily checked by hand. 
\end{proof}

\begin{definition} A graph $\Gamma$ is a maximal triangle-free, 3-colorable graph if 
    \begin{enumerate} 
        \item $\Gamma$ is triangle-free
        \item $\Gamma$ is 3-colorable
        \item Adding any edge to $\Gamma$ results in a violation of (i) or (ii)
    \end{enumerate}
    We denote the class of such graphs by $\mathcal{H}$.
\end{definition}

\begin{propositionN} \label{Counting link to maximal triangle-free, 3-colorable graphs} 
    $\tilde{\mathcal{G}}$ is in bijection with $\mathcal{H}$ via taking the complement. This bijection preserves vertex number. 
\end{propositionN}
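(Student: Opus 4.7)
The plan is to show that the complement map $\Gamma \mapsto \overline{\Gamma}$ is a bijection between $\tilde{\mathcal{G}}$ and $\mathcal{H}$, using the fact that complementation is an involution on simple graphs. The single observation I would lean on is the combinatorial identity that for any graph $\Gamma$ and any edge $e \in E(\Gamma)$, we have $\overline{\Gamma - e} = \overline{\Gamma} + e$, where $+e$ denotes adding $e$ as a new edge. This identity converts ``removing an edge from $\Gamma$'' into ``adding an edge to $\overline{\Gamma}$'', which is exactly the switch needed since the defining conditions of $\tilde{\mathcal{G}}$ and $\mathcal{H}$ are mirror images under complementation.

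First I would verify that if $\Gamma \in \tilde{\mathcal{G}}$ then $\overline{\Gamma} \in \mathcal{H}$. Conditions (i) and (ii) of $\mathcal{H}$ applied to $\overline{\Gamma}$ say precisely that $\overline{\Gamma}$ is triangle-free and 3-colorable, which are conditions (i) and (ii) of the definition of $\tilde{\mathcal{G}}$ applied to $\Gamma$. For the maximality condition (iii) of $\mathcal{H}$, pick any non-edge $e$ of $\overline{\Gamma}$, which is the same data as an edge $e \in E(\Gamma)$. Condition (iii) of $\tilde{\mathcal{G}}$ applied to $\Gamma$ says that $\overline{\Gamma - e}$ has a triangle or fails to be 3-colorable; by the identity, $\overline{\Gamma - e} = \overline{\Gamma} + e$, and this is exactly the maximality clause of $\mathcal{H}$.

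The reverse direction is symmetric: given $H \in \mathcal{H}$, set $\Gamma := \overline{H}$. Then $\overline{\Gamma} = H$ is triangle-free and 3-colorable, and removing any edge $e$ of $\Gamma$ corresponds via the identity to adding $e$ back to $H$, which by maximality of $H$ forces a triangle or kills 3-colorability, giving (iii) of $\tilde{\mathcal{G}}$. Since $\overline{\overline{\Gamma}} = \Gamma$ and complementation preserves the vertex set, the two assignments are mutually inverse and preserve vertex number, establishing the bijection.

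There is no genuine obstacle here; the proposition is essentially a tautological translation under complementation once the edge/non-edge duality is written down carefully. The only bookkeeping point worth emphasizing is the distinction between ``adding an edge'' (a non-edge in $\overline{\Gamma}$, equivalently an edge in $\Gamma$) and ``removing an edge'' (an edge in $\overline{\Gamma}$, equivalently a non-edge in $\Gamma$); this is automatic from the identity $\overline{\Gamma - e} = \overline{\Gamma} + e$ applied in both directions.
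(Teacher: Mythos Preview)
Your proposal is correct and is exactly the straightforward verification one would expect; the paper itself gives no proof for this proposition, treating it as immediate from the definitions, which matches your own remark that the statement is a tautological translation under complementation.
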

One way to interpret the above results is the enumerating minimal prime graphs is equivalent to enumerating maximal 3-colorable, triangle-free graphs. Given one, one can find the other. Before exploring the enumerative connection, we note that the bijection can be refined to minimal prime graphs.

\begin{lemmaN} The complement of a maximal triangle-free, 3-colorable graph $\Gamma$ with chromatic number 3 is a minimal prime graph. 
\end{lemmaN}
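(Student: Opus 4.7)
The plan is to show that $\overline{\Gamma}$ satisfies all four defining conditions of a minimal prime graph. Conditions (ii) and (iii) are immediate from the hypotheses on $\Gamma$ (triangle-free and 3-colorable). For condition (iv), the minimality under edge removal, I would invoke Proposition \ref{Counting link to maximal triangle-free, 3-colorable graphs}, which sets up a bijection between $\tilde{\mathcal{G}}$ and $\mathcal{H}$ by complementation. Since $\Gamma \in \mathcal{H}$, its complement $\overline{\Gamma}$ lies in $\tilde{\mathcal{G}}$, hence $\overline{\Gamma}$ is a possibly disconnected minimal prime graph, which guarantees condition (iv) directly.

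The only substantive task then is to verify condition (i), namely that $\overline{\Gamma}$ is connected. My approach here is by contradiction: assume $\overline{\Gamma}$ is disconnected. Because $\overline{\Gamma} \in \tilde{\mathcal{G}}$ but (by our assumption) is not a minimal prime graph, Lemma \ref{non-MPG PDMPG are disconnected with 2 complete components} forces $\overline{\Gamma}$ to have exactly two connected components, each of which is a complete graph. Taking complements turns two cliques into two independent sets joined by every possible cross-edge, i.e. $\Gamma$ is a complete bipartite graph.

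The key step, and really the heart of the argument, is the observation that a complete bipartite graph has chromatic number at most $2$. This directly contradicts the hypothesis that $\chi(\Gamma) = 3$. Therefore the assumption that $\overline{\Gamma}$ is disconnected is false, and $\overline{\Gamma}$ is connected, completing the verification of all four conditions.

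I do not anticipate any real obstacle in this proof; the substantive work has already been done in Proposition \ref{Counting link to maximal triangle-free, 3-colorable graphs} and Lemma \ref{non-MPG PDMPG are disconnected with 2 complete components}. The only subtle point is recognizing that the chromatic number hypothesis on $\Gamma$ is exactly what rules out the "bad" case where $\overline{\Gamma}$ would be a disconnected pair of cliques. This clarifies why the chromatic number hypothesis in the statement is essential rather than cosmetic.
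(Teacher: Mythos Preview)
Your proposal is correct and follows essentially the same approach as the paper's proof: both establish that $\overline{\Gamma}\in\tilde{\mathcal{G}}$ via the complement bijection, then argue by contradiction that if $\overline{\Gamma}$ were disconnected, Lemma~\ref{non-MPG PDMPG are disconnected with 2 complete components} would force it to consist of two complete components, making $\Gamma$ complete bipartite and hence 2-colorable, contradicting $\chi(\Gamma)=3$. Your write-up is in fact more explicit than the paper's, which jumps straight to the connectedness argument and leaves the other three conditions implicit.
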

\begin{proof} We need to show that the complement of $\Gamma$, $\bar{\Gamma}$ is connected. Suppose not - then $\bar{\Gamma}$ is disconnected but still a possible disconnected minimal prime graph, so by \ref{non-MPG PDMPG are disconnected with 2 complete components} we see it is disconnected with 2 completely connected components. The complement of $\bar{\Gamma}$, returning to $\Gamma$, is thus a complete bipartite graph, which is 2-colorable. 
\end{proof}

Recall that it was proven that the complement of a minimal prime graph has chromatic number 3 in \cite[Lemma 2.3]{2015_REU_Paper}, which was used in Section 4. We give another proof of this statement.

\begin{lemmaN} The chromatic number of the complement of a minimal prime graph $\Gamma$ is $3$. 
\end{lemmaN}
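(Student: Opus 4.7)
The plan is to argue by contradiction using the bijection between $\tilde{\mathcal{G}}$ and $\mathcal{H}$ established in Proposition \ref{Counting link to maximal triangle-free, 3-colorable graphs}. Since every minimal prime graph is in particular a PDMPG, $\bar\Gamma$ lies in $\mathcal{H}$; that is, $\bar\Gamma$ is a maximal triangle-free, 3-colorable graph. I would then suppose for contradiction that $\chi(\bar\Gamma) \leq 2$, so that $\bar\Gamma$ is bipartite with some bipartition $A \sqcup B$.

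The heart of the argument is a short case analysis on this bipartition. First, assume both $A$ and $B$ are nonempty. Any missing edge between $A$ and $B$ could be added to $\bar\Gamma$ while preserving bipartiteness, and hence both triangle-freeness and 3-colorability; by maximality no such edge can be missing, forcing $\bar\Gamma = K_{|A|,|B|}$. Complementing then gives $\Gamma = K_{|A|} \sqcup K_{|B|}$, which is disconnected, contradicting the connectedness required of an MPG. Second, if one of the parts is empty, then $\bar\Gamma$ has no edges at all; since $\Gamma$ (and hence $\bar\Gamma$) has at least two vertices, we may insert any single edge into $\bar\Gamma$ without creating a triangle or losing bipartiteness, again contradicting maximality.

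Both cases yield a contradiction, so $\chi(\bar\Gamma) \geq 3$; combined with the 3-colorability built into the definition of an MPG, this gives $\chi(\bar\Gamma) = 3$. The main obstacle is the structural observation in the first case: recognizing that maximality in $\mathcal{H}$ is rigid enough to force a 2-colorable $\bar\Gamma$ to be complete bipartite. Once this is in hand, the connectedness axiom of MPGs immediately closes the argument, and the empty-part subcase is a quick sanity check.
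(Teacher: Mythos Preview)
Your argument is correct and follows essentially the same strategy as the paper: assume $\bar\Gamma$ is 2-colorable, use maximality (equivalently, minimality of $\Gamma$) to force $\bar\Gamma$ toward a complete bipartite graph, and then contradict connectedness of $\Gamma$. The only cosmetic difference is in the endgame: the paper stops at ``complete bipartite minus one edge'' and invokes the earlier fact that such graphs lie in $\mathcal{C}$ rather than $\mathcal{G}$, whereas you push all the way to $K_{|A|,|B|}$ and read off disconnectedness directly---your version is slightly cleaner, but the idea is the same.
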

\begin{proof}
We already have that the complement $\bar{\Gamma}$ is 3-colorable from the definition. Suppose $\bar{\Gamma}$ is 2-colorable. Then we can assign such a coloring, and note that the result is a subgraph of a complete bipartite graph. It cannot be the complete bipartite graph itself because then $\Gamma$ is disconnected. So it is a subgraph of the complete bipartite graph. But if there is more than 1 edge missing from being a complete bipartite graph, it can be added without violating being triangle-free or 2-colorable. Thus there is only one edge missing from being a complete bipartite graph, so the graph is of class $\mathcal{C}$ which we already showed are not minimal prime graphs. 
\end{proof}
\begin{theoremN} A graph $\Gamma$ is a minimal prime graph if and only if its complement is a maximal triangle-free 3-colorable graph with chromatic number 3. 
\end{theoremN}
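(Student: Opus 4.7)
The plan is to assemble the theorem directly from the three preceding results: the bijection between $\tilde{\mathcal{G}}$ and $\mathcal{H}$ via complementation (Proposition \ref{Counting link to maximal triangle-free, 3-colorable graphs}), the lemma that the complement of a maximal triangle-free, 3-colorable graph with chromatic number $3$ is a minimal prime graph, and the lemma that the complement of a minimal prime graph has chromatic number $3$. Since both directions are already essentially proved, the work is purely one of collation.

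For the forward direction, I would start by letting $\Gamma \in \mathcal{G}$. Since every minimal prime graph is in particular a possibly disconnected minimal prime graph, $\Gamma \in \tilde{\mathcal{G}}$. Applying the bijection of Proposition \ref{Counting link to maximal triangle-free, 3-colorable graphs}, we get $\bar{\Gamma} \in \mathcal{H}$, i.e.\ $\bar{\Gamma}$ is a maximal triangle-free, 3-colorable graph. Combining this with the lemma asserting that the chromatic number of $\bar{\Gamma}$ is exactly $3$, we conclude that $\bar{\Gamma}$ is a maximal triangle-free, 3-colorable graph of chromatic number $3$.

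For the backward direction, I would simply invoke the lemma stating that the complement of any maximal triangle-free, 3-colorable graph with chromatic number $3$ is a minimal prime graph. Setting $\Lambda = \bar{\Gamma}$, this lemma immediately gives $\Gamma = \bar{\Lambda} \in \mathcal{G}$.

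The hard part is not really hard at all here, since all the heavy lifting has been done in the preceding lemmas; the main care needed is to make explicit that the bijection of Proposition \ref{Counting link to maximal triangle-free, 3-colorable graphs} restricts to a bijection between minimal prime graphs and maximal triangle-free, 3-colorable graphs of chromatic number $3$, rather than just the larger class $\tilde{\mathcal{G}} \leftrightarrow \mathcal{H}$. The chromatic-number-$3$ condition is precisely what cuts out the disconnected elements of $\tilde{\mathcal{G}}$ on the prime-graph side (these correspond to bipartite complete graphs on the other side, which are $2$-colorable), so the characterization is sharp.
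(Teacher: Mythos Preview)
Your proposal is correct and essentially matches the paper's own proof, which simply states that the theorem ``follows directly from the two above Lemmas.'' You are slightly more explicit than the paper in invoking Proposition \ref{Counting link to maximal triangle-free, 3-colorable graphs} to secure that $\bar{\Gamma}\in\mathcal{H}$ in the forward direction, but this is the same underlying argument.
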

\begin{proof} This follows directly from the two above Lemmas. 
\end{proof} 


\begin{remark}
    The set of maximal triangle-free, 3-colorable graphs is not either a superset or a subset of maximal triangle-free graphs. To see it is not a superset, take the Groetzsch graph, which is maximal triangle-free. To see it is not be a subset, note that a graph could be maximal triangle-free, 3-colorable but not maximal triangle-free, if it has an edge which, when added, makes it chromatic number 4 but does not add a triangle. Such a graph necessarily contains a pair of vertices $v_1, v_2$ with distance $\geq 3$ such that every 3-coloring of the graph results in $v_!, v_2$ having the same color. One such example, on 12 vertices, is given in \cite{Chris_counterexample}. 
\end{remark}

\begin{lemmaN}\label{asymptotic for labeled maximal triangle-free, 3-colorable graphs}
    The number of labeled maximal triangle-free, 3-colorable graphs on $n$ vertices is bounded below by $2^{(\frac{n^2}{8} + o(n^2))}$. 
\end{lemmaN}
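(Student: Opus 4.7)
The goal is to exhibit an explicit family of $2^{n^2/8+o(n^2)}$ distinct labeled graphs on $[n]$ which are each triangle-free, $3$-colorable, and maximal with respect to these two properties simultaneously. This is the expected regime, since by Mantel's theorem any triangle-free graph has at most $n^2/4$ edges, and the folklore asymptotic $2^{n^2/8+o(n^2)}$ is the same order known for the count of maximal triangle-free graphs.

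The plan is to use a bipartite base together with local perturbations. First, fix a balanced bipartition $[n] = A \sqcup B$ with $|A|=|B|= \lfloor n/2 \rfloor$. The complete bipartite graph $K_{A,B}$ itself is a maximal triangle-free, $3$-colorable graph: it is $2$-colorable (hence $3$-colorable), any intra-part edge $aa'$ creates a triangle with any common cross-part neighbor, and no bipartite edge is missing. This gives a starting baseline, but only polynomially many distinct such graphs arise from varying the bipartition.

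To inflate the count to $2^{n^2/8}$, I would perform structural ``swap'' perturbations within rectangles. Decompose $A \times B$ into as many disjoint rectangles of the form $R = \{a_i,a_j\} \times \{b_k,b_l\}$ as possible; a near-perfect packing contains on the order of $n^2/8$ such rectangles. For each rectangle $R$, make an independent binary choice between the ``parallel'' pair $\{(a_i,b_k),(a_j,b_l)\}$ and the ``diagonal'' pair $\{(a_i,b_l),(a_j,b_k)\}$, producing $2^{\Theta(n^2/8)}$ graphs $\Gamma_\sigma$ indexed by the swap vector $\sigma$. By construction, $\Gamma_\sigma$ remains bipartite (hence triangle-free and $2$-colorable), and each intra-part edge still creates a triangle via a preserved cross-part neighbor, so swaps preserve one half of maximality.

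The key verification steps would then be: (1) triangle-freeness of each $\Gamma_\sigma$ (immediate from bipartiteness); (2) $3$-colorability of each $\Gamma_\sigma$ (immediate from the bipartition giving a $2$-coloring); (3) maximality of each $\Gamma_\sigma$, meaning every non-edge either creates a triangle or destroys $3$-colorability when added; and (4) distinctness of $\Gamma_\sigma$ for distinct $\sigma$, which follows because disjoint rectangles contribute disjoint sets of edges.

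The main obstacle will be step (3) for \emph{bipartite} non-edges. Since $\Gamma_\sigma$ is bipartite, adding any missing edge across the bipartition preserves bipartiteness and hence $2$-colorability, so such an edge must be forbidden by triangle-creation alone. I would address this by attaching a small non-bipartite ``anchor'' gadget (for instance an induced $C_5$ on $O(1)$ auxiliary vertices, chromatically forcing color conflicts) that rigidifies $3$-colorability and makes every currently missing bipartite edge either a triangle-creator or a chromaticity-destroyer; the anchor only contributes a lower-order $o(n^2)$ correction to the exponent, consistent with the stated $2^{n^2/8+o(n^2)}$ bound. Finally, the $o(n^2)$ slack in the exponent also absorbs any loss from the imperfect rectangle packing and from symmetries of the bipartition.
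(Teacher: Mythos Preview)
There is a genuine gap at step~(3), and the fix you sketch cannot close it. Each $\Gamma_\sigma$ is a bipartite graph on the fixed parts $A,B$ and omits roughly half of the possible cross edges. For any such missing edge $ab$, the graph $\Gamma_\sigma+ab$ is still bipartite, hence still triangle-free and $2$-colorable; so $ab$ can \emph{never} be blocked on chromaticity grounds, and the only way to block it is to supply a common neighbor of $a$ and $b$. An $O(1)$-vertex anchor such as an attached $C_5$ cannot do this for $\Theta(n^2)$ missing pairs while keeping the whole graph triangle-free: any anchor vertex must be adjacent to an independent set of $\Gamma_\sigma$, and it only blocks the missing pairs lying inside that set. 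Without a working gadget every $\Gamma_\sigma$ can be completed by restoring all missing cross edges, and $K_{A,B}$ is already maximal; so all of your $2^{\Theta(n^2)}$ choices share the \emph{same} maximal completion and distinctness collapses entirely. There is also a bookkeeping slip: $|A\times B|=(n/2)^2=n^2/4$, and each rectangle $\{a_i,a_j\}\times\{b_k,b_l\}$ uses four of these pairs, so an edge-disjoint packing contains at most $n^2/16$ rectangles, giving only $2^{n^2/16}$ choices even before the maximality issue.

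The paper resolves exactly the obstacle you isolate, but with $\Theta(n)$ auxiliary vertices rather than $O(1)$. It takes unequal parts $A$ of size $n/4$ and $B$ of size $n/2$, and lets the free choice be an \emph{arbitrary} bipartite graph between them --- already $2^{(n/4)(n/2)}=2^{n^2/8}$ labeled graphs. It then adjoins a shadow set $A'$ of size $n/4$: for each $a\in A$, the twin $a'$ is adjacent to $a$ and to every $b\in B\setminus N(a)$. Now every non-edge $ab$ between $A$ and $B$ has the built-in common neighbor $a'$, so adding it creates the triangle $(a,a',b)$; symmetrically no missing $A'$--$B$ edge can be added. Completing to a maximal triangle-free $3$-colorable graph therefore only adds edges inside $A\cup A'$, leaving the $A$--$B$ pattern (the record of the original choice) untouched and guaranteeing distinctness. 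This complement-of-neighborhood twin is precisely the mechanism your proposal is missing.
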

\begin{proof}
    We adapt the "folklore" construction described in \cite{max_tri_free_construction}. A similar construction can be found in  \cite{asymptotic_max_tri-free}. 
    
    Let $n$ be given. Take any bipartite graph with parts of size $n/4, n/2$, call them $A$ and $B$ respectively. Then we embed it into a maximal triangle-free, 3-colorable graph, as follows. Take $A'$ a set of $n/4$ vertices, where for every $a \in A$ we introduce $a' \in A'$ which is adjacent to $a$, and every vertex $b \in B$ such that $a$ and $b$ are not adjacent. 
    
    This graph is triangle-free, and 3-colorable, if we let $A, A', B$ be the separate colors. However it might not be maximal with respect to those properties. 
    
    An edge cannot be added between any two vertices $a_1, a_2 \in A$, as they may have a mutual neighbor $b \in B$, which exists with proportion $1$ as $n \to \infty$. Similarly an edge cannot be introduced between two vertices $b_1, b_2 \in B$, due to a mutual neighbor $a \in A$. Given $a \in A, b\in B$, the edge $a, b$ cannot be added, as this forms a triangle on $(a, a', b)$. Given $a' \in A', b \in B$ which are not adjacent, $a', b$ cannot be added as this similarly forms a triangle on $(a, a', b)$. 
    
    So any edge added to this graph must occur between a vertex in $A$ and a vertex in $A'$, or between vertices in $A'$. However, we can simply add such edges until the graph is maximal, 3-colorable, to see that we have embedded our original bipartite graph in a maximal triangle-free, 3-colorable graph on $n$ vertices. Since there are $2^{\frac{n^2}{8}}$ such graphs, this yields the lower bound of $2^{\frac{n^2}{8} + o(n^2)}$ maximal triangle-free, 3-colorable graphs. 
\end{proof}

\section{Outlook}
The bulk of this paper establishes various properties of prime graphs of finite solvable groups, but much less is known about the prime graphs of general finite groups. A few facts have been established - the prime graphs of the simple groups are known \cite{Kondratev_Simple_Groups}, and a prime graph of a finite group cannot be either a chain of length greater than 4 \cite{chain_length_prime_groups}, or 5 disconnected points \cite{Kondratev_cocliques}. Generally speaking, very little is known about which graphs are realizable as prime graphs of finite groups, which reflects the difficulty of understanding finite groups in comparison with solvable groups. Nevertheless, properties of prime graphs of solvable groups give rise to interesting conjectures and questions about realizability of graphs as the prime graph of some finite group. One such example is Maslova's Conjecture \cite{khukhro2014unsolved}, which she Gorshkov proved for almost simple groups \cite{Maslova_almost_simple}. 
\begin{conjectureN}[Maslova's Conjecture]
    Given a finite group $G$ such that its prime graph does not contain 3-cocliques, its prime graph is isomorphic to the Gruenberg-Kegel graph of some finite solvable group.
\end{conjectureN}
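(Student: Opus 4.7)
The plan is to approach Maslova's Conjecture by reduction to the almost simple case already settled by Gorshkov and Maslova \cite{Maslova_almost_simple}. By the characterization in \cite{2015_REU_Paper}, the hypothesis that the prime graph of $G$ contains no 3-coclique is equivalent to $\overline{\Gamma(G)}$ being triangle-free, while the conclusion that $\Gamma(G)$ is isomorphic to a solvable prime graph is equivalent to $\overline{\Gamma(G)}$ being additionally 3-colorable. Thus the problem reduces to showing that for every finite group $G$ with $\overline{\Gamma(G)}$ triangle-free, one has $\chi(\overline{\Gamma(G)}) \leq 3$.

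First I would set up the structural reduction. Let $R = R(G)$ be the solvable radical and consider $\overline{G} = G/R$, which embeds into $\mathrm{Aut}(S_1 \times \cdots \times S_k)$ with each $S_i$ non-abelian simple. Then $\pi(G) = \pi(R) \cup \pi(\overline{G})$, and $\Gamma(G)$ decomposes as the subgraphs induced on $\pi(R)$ and on each $\pi(S_i) \cup \pi(R)$, together with ``bridge'' edges between the $\pi(S_i)$ coming from elements of mixed order in $G$. The subgraph on $\pi(R)$ is already a solvable prime graph, so by \cite{2015_REU_Paper} its complement is 3-colorable and serves as a base coloring to extend.

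Second, I would apply the Gorshkov--Maslova theorem to each almost simple piece to obtain a 3-coloring of $\overline{\Gamma}$ restricted to each $\pi(S_i)$, after noting that the no-3-coclique hypothesis passes to each $\pi(S_i)$. The technical heart is then patching: one must reconcile how these local colorings treat shared primes in $\pi(R)$, and verify that no bridge edge becomes monochromatic. When $k=1$ the patching should go through by judiciously permuting the three color classes so the almost simple coloring aligns with the one on $\pi(R)$; when $k \geq 2$ the no-3-coclique hypothesis is extraordinarily restrictive, since any triple of primes drawn from three distinct factors $S_i$ whose orders do not pairwise divide a common element order would produce a 3-coclique, forcing many extra bridge edges and sharply limiting the coloring possibilities.

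The hard part will be this patching step. The natural obstruction is that 3-colorings from different almost simple factors may impose genuinely incompatible constraints on primes of $\pi(R)$, and the triangle-free hypothesis alone offers no obvious compatibility guarantee; the best current evidence for compatibility is indirect, via solvable prime graph theory. A plausible route around this is to invoke the classification of finite simple groups and use the explicit descriptions of simple-group prime graphs in \cite{Kondratev_Simple_Groups} to enumerate the finitely many ``shapes'' each $\pi(S_i)$-induced subgraph can take under the hypothesis, then perform a CFSG-level case analysis on how such shapes can co-exist inside a single $G$. My expectation is that the conjecture ultimately requires either this kind of heavy CFSG casework or an as-yet-undiscovered structural principle linking $\chi(\overline{\Gamma(G)})$ to group-theoretic invariants; a purely graph-theoretic reduction, in the spirit of the earlier sections of this paper, seems unlikely to suffice.
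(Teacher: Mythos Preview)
The paper does not prove this statement. It is stated in the Outlook section as an open conjecture, with the remark that Gorshkov and Maslova established the special case of almost simple groups in \cite{Maslova_almost_simple}. There is therefore no ``paper's own proof'' to compare against.

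Your proposal is not a proof either, and to your credit you essentially say so: you outline a reduction to the almost simple case via the solvable radical, invoke the Gorshkov--Maslova result on each almost simple composition factor, and then identify the patching of local 3-colorings as the obstruction you cannot resolve. That obstruction is genuine. Even in the case $k=1$ your sketch is incomplete: you assert that the coloring on $\pi(R)$ and the coloring on the almost simple piece can be reconciled ``by judiciously permuting the three color classes,'' but a single permutation of three colors gives you only six degrees of freedom, whereas the overlap $\pi(R)\cap\pi(\overline{G})$ can be arbitrarily large and there is no reason the two colorings should be compatible up to a global permutation. You also need to control edges of $\overline{\Gamma(G)}$ that run between $\pi(R)\setminus\pi(\overline{G})$ and $\pi(\overline{G})\setminus\pi(R)$, which neither local coloring sees. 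For $k\ge 2$ you correctly note that the hypothesis forces many extra edges, but ``sharply limiting the coloring possibilities'' is not the same as producing a coloring. As written this is a plausible research plan for attacking an open problem, not a proof; the conjecture remains open in general.
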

From our work, one can ask similar questions about finite groups, for example
\begin{enumerate}
    \item Is there a finite group $G$ such that its prime graph has more than 5 vertices and is self-complimentary?
    \item Is there a finite group $G$ such that its prime graph is not Hamiltonian and its complement has chromatic number $\geq 3$?
\end{enumerate}

Another direction being explored, primarily by the second author, is the spectral theory of the minimal prime graphs of finite solvable groups. This is the content of a separate paper, titled \textit{The Adjacency Spectra of Some Families of Minimally Connected Prime Graphs}, being written at the time of writing.

We expect that the lower bound given by \ref{asymptotic for labeled maximal triangle-free, 3-colorable graphs} should be sharp, since it is exactly the asymptotic for maximal triangle-free graphs \cite{asymptotic_max_tri-free}. However, by the prior remark, the set of maximal triangle-free, 3-colorable graphs is neither a subset or a superset of maximal triangle-free graphs. Ideally, one might be able to asymtotically approximate the maximal triangle-free, 3-colorable graphs which are not maximal triangle-free by filling in the allowed edges towards a maximal triangle-free graph. Although this process might over-count, asymptotically we expect it should not be an issue. 

\section{Acknowledgements}

This research was conducted under NSF-REU grant DMS $1005206$ by the first, second, third, and fifth authors during the Summer of 2020 under the supervision of the fourth author.  The authors graciously acknowledge the financial support of NSF, as well as the hospitality of Texas State University.  In particular, Dr. Yong Yang, the director of the REU program, is thanked for persevering despite the conditions created by the pandemic and running the REU program in full.  

\newpage

\printbibliography

\end{document}